\tikzstyle{decision} = [diamond, draw, fill=blue!20, 
\tikzstyle{block} = [rectangle, draw, fill=blue!20, 
\tikzstyle{line} = [draw, -latex']
\tikzstyle{cloud} = [draw, ellipse,fill=red!20, node distance=3cm,
\tikzset{main node/.style={circle,fill=blue!20,draw,minimum size=1cm,inner sep=0pt},  }
\numberwithin{equation}{section}
\theoremstyle{plain}
\newtheorem{theorem}{Theorem}
\newtheorem{remark}[theorem]{Remark}
\newcommand{\vx}{\mathbf{x}}
\newcommand{\vv}{\mathbf{v}}
\newcommand{\vq}{\mathbf{q}}
\newcommand{\vw}{\mathbf{w}}
\newcommand{\vm}{\mathbf{m}}
\newtheorem{example}{Example}
\newtheorem{model}{Model}
\newtheorem{test}{Test}
\author[Ding, Li, Osher, Yin]{Lisang Ding \and Wuchen Li \and Stanley Osher \and Wotao Yin}
\email{lisangding@ucla.edu}
\email{wcli@math.ucla.edu}
\email{sjo@math.ucla.edu}
\email{wotaoyin@math.ucla.edu}
\title[A mean field game inverse problem]{A mean field game inverse problem}
\begin{document}

\begin{abstract}
	Mean-field games arise in various fields including economics,  engineering and machine learning. They study strategic decision making in large populations where the individuals interact via certain mean-field quantities. The ground metrics and running costs of the games are of essential importance but are often unknown or only partially known. In this paper, we propose mean-field game inverse-problem models to reconstruct the ground metrics and interaction kernels in the running costs. The observations are the macro motions, to be specific, the density distribution and velocity field of the agents. They can be corrupted by noise to some extent. Our models are PDE constrained optimization problems, which are solvable by first-order primal-dual methods. Besides, we apply Bregman iterations to find the optimal model parameters. 		We numerically demonstrate that our model is both efficient and robust to noise.
\end{abstract}

\keywords{Mean-field game; Inverse problem;
	Primal-dual algorithm; Bregman iteration.}
\thanks{This paper is under the support of AFOSR MURI FA9550-18-1-0502.}
\maketitle


\section{Introduction}
Mean-field games (MFGs) study strategic decision making in large populations
where the individuals interact via certain mean-field quantities~\cite{lasry_jeux_2006,lasry_jeux_2006-1,cardaliaguet_master_2015,lasry_mean_2007}. In an MFG, the decision of each player depends on their state and interactions with, not just individual, but all other players. MFGs are used to study the strategies of players at a macro level. In recent years, MFGs have gained enormous popularity,
starting to play vital roles in many research fields including
economics~\cite{achdou_partial_2014,morel_mean_2011,achdou_income_2017,gomes_economic_2015}, finance~\cite{firoozi_optimal_2017,cardaliaguet_mean_2018,casgrain_mean_2019,lehalle_mean_2019,jaimungal_mean-field_2015}, engineering~\cite{de_paola_mean_2019,kizilkale_integral_2019,gomes_mean-field_2020} and machine learning~\cite{e_mean-field_2019,yang_mean_2018}.

One of the typical MFG formulations is the following transport-related optimization problem ~\cite{cardaliaguet_master_2015,lasry_mean_2007}:
\begin{align*}
\underset{\mbox{density,velocity}}{\mbox{minimize}}\quad & (\mbox{kinectic energy}) + (\mbox{regularization}) + (\mbox{final-time cost})\\
\mbox{subject to}\quad& \mbox{(transport equation)}\\
& \mbox{(initial density),}
\end{align*}
or written in math notation,
\begin{equation}
\label{MFG-Benemou}
\begin{split}
\underset{\rho,\vv}{\mbox{minimize\quad}}
&
\displaystyle{
	\int_0^T\left[\int_{\mathbb{T}^d} \frac{1}{2}\rho\vv^TG_M\vv dx+\mathcal{F}(\rho(\cdot,t))\right]dt+\mathcal{G}(\rho(\cdot,T))
}\\
\mbox{subject to\quad}&
\rho_t+\nabla \cdot (\rho\vv)=0\\
&\rho(\cdot,0)=\rho_0,
\end{split}
\end{equation}
where the problem is spatially defined in the $d$-dimensional torus $\mathbb{T}^d:=\mathbb{R}^d/\mathbb{Z}^d$, $\rho:\mathbb{T}^d\times [0,T]\rightarrow \mathbb{R}$ is the density distribution with its initial state set to $\rho_0$, $\vv\in\mathbb{R}^d$ is the velocity field, $G_M(x)$ is a $d\times d$ symmetric positive definite matrix (called the ground metric) that determines the kinetic energy consumed in different directions, 
$\mathcal{F}:\mathcal{P}(\mathbb{T}^d)\rightarrow \mathbb{R}$ is a convex functional that regularizes $\rho$, and finally $\mathcal{G}:\mathcal{P}(\mathbb{T}^d)\rightarrow \mathbb{R}$ is a convex functional of the terminal density distribution.

The objective in \eqref{MFG-Benemou} is the sum of kinetic energy and potential regularization. In practice, $\mathcal{F}(\cdot)$ can be quadratic function or the convolution function as in maximum mean discrepancy (MMD)~\cite{scholkopf_kernel_2007}, and $\mathcal{G}(\cdot)$ can be indicator function in typical optimal transport problem, or the distance between $\rho_T$ and its projection onto some convex set in~\cite{chow_algorithm_2019}. The continuity equation depicts that, the change of density mass equals to the flow-in mass minus the flow-out mass. In other words, the density of players can be viewed as compressible fluid.

In general, the MFG problem~(\ref{MFG-Benemou}) has no closed-form solution. Existence and uniqueness of a solution $(\rho,\vv)$ have been studied under suitable conditions in~\cite{cardaliaguet_master_2015,lasry_mean_2007,achdou_mean_2010}. There has been great progress in numerically solving the problem on a grid~\cite{achdou_mean_2010,bellomo_variational_2017,chow_algorithm_2017,chow_algorithm_2018,chow_algorithm_2019} or in a neural network~\cite{lin_apac-net:_2020}.

We call (\ref{MFG-Benemou}) the forward problem and name the problem of recovering the ground metric $G_M$ and the cost functionals $\mathcal{F}$ from the observations of $\rho$ and $\vv$  \emph{the inverse problem.}
The ground metric $G_M$ can depict the geometric structure of the sample space and decides the kinetic energy.
The cost functional $\mathcal{F}$, when taking a convolution form with a kernel $K$, can depict the total interaction energy among the players. When $G_M$ and $\mathcal{F}$ are unknown or partially known, learning them from the observable data becomes an important inverse problem.

In this paper, we recreate ground metric $G_M$ and convolution kernel $K$ from either clean or noisy observations of density distribution $\rho$ and velocity field $\vv$. We study this problem since $\rho$ can be observed directly or indirectly over time, and $\vv$ can also be measured directly from the game players. There are two scenarios in which we consider observation noise:
\begin{itemize}
	\item Players do not completely play to their optimal strategy, which can be different from the Nash equilibrium (NE).
	\item We apply an MFG (with infinite players) to approximate decision-making by a finite but large number players, causing a difference between the model and reality.
\end{itemize}


We take three steps to derive an inverse model in this paper. First, we deduce the KKT condition, which is an MFG PDE system whose solution equals to the minimizer of the optimization problem (\ref{MFG-Benemou}) under suitable conditions. Next, we add appropriate regularization (quadratic regularization on $\rho,\vv$, and $H^p$ norm on the reconstruction target) in the objective function. Combining above steps, we derive
the following inverse-problem model:
\begin{equation}
\label{intro-model}
\begin{split}
\underset{\theta,\rho,\vv}{\mbox{min}}\quad
&
\displaystyle{
	\frac{\alpha}{2}\|\rho-\hat{\rho}\|^2+\frac{\beta}{2}\|\vv-\hat{\vv}\|^2
	+\frac{\alpha_0}{2}
	\left(
	\|\rho_0-\hat{\rho}_0\|^2+\|\rho_T-\hat{\rho}_T\|^2
	\right)
	+\frac{\gamma}{p}\|\nabla \theta\|_p^p
}\\
\mbox{s.t.\quad}
&\rho_t+\nabla\cdot(\rho \vv)=0\\
&(G_M\vv)_t+\nabla\left(\frac{1}{2}\vv^T G_M\vv-\frac{\delta}{\delta\rho}\mathcal{F}(\rho)\right)=0\\
&\frac{\partial (G_M\vv)_i}{\partial{x_j}}=\frac{\partial (G_M\vv)_j}{\partial{x_i}}, \qquad i\not=j\\
&\int_{s_i(\hat{x}^i,\cdot)} (G_M\vv)_i dS_x=0,\quad\hat{x}^i\in\mathbb{T}^{d-1}, i=1,2,\ldots,d,
\end{split}
\end{equation}
where $(G_M\vv)_i$ and $(G_M\vv)_j$ denote the $i^{th}$ and $j^{th}$ component of vector $G_M\vv$, respectively. In this model, $\theta$ is the parameter, which is either the ground metric kernel $g_0$ determining $G_M$, or the convolution kernel linear in $\mathcal{F}$ determining the running cost. In objective function, $\hat{\rho},\hat{\vv}$ are observations, possibly noisy,
and the $L^2$ distance is applied for regularization. 
Then we regularize $\theta$ with $H^p$ norm, where $p=1,2,\dots$ can be selected according to the property of $\theta$. Here the first equation is the continuity equation, and the second one corresponds to a reformulation of Hamilton-Jacobi equation (HJE). We notice that the inverse model is non-convex due to the bi-linear constraints though the objective functional is convex. 


We numerically solve the inverse problem by discretizing (\ref{MFG-Benemou}) on a grid. 
The discrete inverse model is solved by a primal-dual method on its Lagrangian. In each iteration, we perform gradient descent to the primal variable and then update the dual variable with gradient ascent. The algorithm converges to a saddle point of the Lagrangian, which is a stationary point of the inverse model.

\textbf{Related work}: There are various approaches that successfully compute MFG (forward) problems with applications. Based on augmented Lagrangian, ~\cite{bellomo_variational_2017} solves the MFG via a primal-dual approach. ~\cite{li_parallel_2018} presents a parallel PDHG algorithm to compute the earth mover's distance, which is a special MFG type problem.  ~\cite{chow_algorithm_2017,chow_algorithm_2018,chow_algorithm_2019} introduce fast algorithms for the HJEs arising from optimal transport and MFG. ~\cite{liu_multilevel_2019,elamvazhuthi_optimal_2020} propose fast algorithms for Wasserstein-$p$ distances. For applications,~\cite{engquist_seismic_2018} applies optimal transport to seismic imaging, to be specific, selects the Wasserstein metric as a misfit function for full-waveform inversion. In addition, \cite{kachroo_inverse_2016} presents an inverse problem learning the traffic dynamics model via an MFG approach.

Recently, inverse problems for optimal transport have been studied. For example, \cite{li_learning_2018} proposes a unified data-driven framework to learn the adaptive, nonlinear interaction cost functions in the matching process from data corrupted by noise and then make predictions to new matchings. ~\cite{stuart_inverse_2020} proposes a framework to learn the unknown ground costs from noisy observations during optimal transport.
In particular, \cite{li_learning_2018,stuart_inverse_2020} focus on the linear programming formulation of inverse optimal transport problems.
Compared to existing works, we focus on PDE formulations of MFGs. Here the MFG system describes the dynamics of agents, where
we have observations about the motion and strategy adopted by the agents during the game.
Our observation is time dependent, which is different from the static joint distribution in \cite{li_learning_2018,stuart_inverse_2020}. In addition, for MFG with interaction energy, MFG dynamics can not be formulated as a minimizer of linear programming.


Here we summarize our contributions for MFG inverse problems as follows:
\begin{itemize}
	\item[(i)] We propose an inverse model for MFG. From the observation of feasible physical quantities, we recreate both ground metric and the interaction kernel function.
	\item[(ii)] We give a discrete format of the inverse problem on the grid.
	\item[(iii)] We provide a computational method for solving inverse MFG problems in an efficient fashion. Our approach is quite robust to noise in the observations.
	\item[(iv)] We apply Bregman iteration methods for the proposed constrained optimization.
\end{itemize}

\textbf{Organization}: The rest of the paper is organized as follows. In Section \ref{chap:Theory}, we briefly review some MFG theoretic basics and present two MFG models. In Section \ref{chap:MFG-model}, we deduce PDE systems equivalent to the MFG models and present the proposed inverse models, one for the ground metric $G_M$, and the other for the convolution kernel $K$. In Section \ref{chap:disc}, we discretize the MFG optimization problem (\ref{MFG-Benemou}) on a grid and do the same to their inverse models. Then, we apply primal-dual algorithms to solve the inverse models. {A Bregman approach to improve the algorithm performance is also proposed.} In Section \ref{chap:comp_result}, we present our computational results for the inverse models in both 1 and 2 dimensions. The presented results correspond to observations that are corrupted by noise at different levels.

\section{Review of mean-field games}
\label{chap:Theory}
In this section, we review the theoretical basics of MFG and give two special examples: regularized optimal transport and MMD interaction.

The standard potential MFG is already given in (\ref{MFG-Benemou}), where it has unknowns $\rho,\vv$. The objective function is the sum of kinetic and potential energies, with an additional terminal cost. The constraint is a continuity equation, depicting the macro motion of infinitely many players that are approximated by compressible fluid dynamics. For simplicity, in this paper, we assume the density distribution $\rho$ to be strictly positive,
\begin{equation*}
\begin{split}
\rho&\in\mathcal{P}_+(\mathbb{T}^d\times[0,T])=\Big\{
\rho(x,t)\in C^{1}(\mathbb{T}^d\times[0,T])	\mid \rho(x,t)>0,\int_0^T\int_{\mathbb{T}^d}\rho(x,t)dxdt<+\infty
\Big\},
\end{split}
\end{equation*}
also assume the terminal cost $\mathcal{G}$ in (\ref{MFG-Benemou}) is the indicator function of $\{\rho_T\}$, and the final-time state is given by $\rho_T$. 

Next,
we introduce the KKT condition of (\ref{MFG-Benemou}). Let $\vm=\rho\vv$, the product of density distribution and the velocity, denote the flux. Substitute $\vv$ with $\vm/\rho$ in (\ref{MFG-Benemou}); then, the optimization problem is transferred to a convex problem. Take $\varphi$ as the Lagrangian multiplier for the continuity equation. The Lagrangian of (\ref{MFG-Benemou}) is the sum of objective function and multiplied constraint violation:
\begin{equation*}
\begin{split}
&\operatorname*{min}_{\vm,\rho}\operatorname*{max}_{\varphi}
\int_{0}^{T}\int_{\mathbb{T}^d}\frac{1}{2}\frac{\vm^TG_M\vm}{\rho}
+
\varphi(\nabla\cdot  \vm+\rho_t)dxdt+\int_0^T\mathcal{F}(\rho(\cdot,t))dt\\
=
&\operatorname*{min}_{\vm,\rho}\operatorname*{max}_{\varphi}
\int_{0}^{T}\!\int_{\mathbb{T}^d}
\frac{1}{2}\frac{\vm^TG_M\vm}{\rho}
-\nabla\varphi^T\vm-\varphi_t\rho dxdt+\!\int_0^T\!\mathcal{F}(\rho(\cdot,t))dt
+\!\int_{\mathbb{T}^d}\!\varphi\rho dx\bigg|_{t=0}^{t=T}.
\end{split}
\end{equation*}

The optimization solution corresponds to a saddle point of the Lagrangian in density space. Since the optimization problem is convex, $\rho\in\mathcal{P}_+(\mathbb{T}^d)$ is strictly positive, the saddle point is exactly the point where the variation w.r.t. primal/dual variables vanishes.
Take the $L^2$ variation to the Lagrangian w.r.t. $\vm,\rho,\varphi$ over $\mathbb{T}^d\times (0,T)$. Then the KKT condition of (\ref{MFG-Benemou}) can be written as:
\begin{subequations}
	\label{MFG-KKT}
	\begin{align}
	\label{MFG-KKT1}
	\frac{G_M\vm}{\rho}-\nabla\varphi&=0\\
	\label{MFG-KKT2}
	-\frac{\vm^TG_M\vm}{2\rho^2}+\frac{\delta}{\delta\rho}\mathcal{F}(\rho)-\varphi_t&=0\\
	\label{MFG-KKT3}
	\nabla\cdot  \vm+\rho_t&=0,
	\end{align}
\end{subequations}
where $\delta$ in (\ref{MFG-KKT2}) is the $L^2$ variation.
From (\ref{MFG-KKT1}), we solve $\vm$ with $\rho,\varphi,G_M$ as:
\begin{equation*}
\vm=\rho G_M^{-1}\nabla\varphi.
\end{equation*}
Substituting $\vm$'s representation of $\rho,\varphi,G_M$ into (\ref{MFG-KKT2})(\ref{MFG-KKT3}), we obtain the MFG system:
\begin{subequations}
	\label{MFG-system}
	\begin{align}
	\label{MFG-system1}
	\rho_t+\nabla\cdot(\rho G_M^{-1}\nabla\varphi)&=0\\
	\label{MFG-system2}
	\frac{1}{2}\nabla\varphi^TG_{M}^{-1}\nabla\varphi+\varphi_t-\frac{\delta}{\delta\rho}\mathcal{F}(\rho)&=0,
	\end{align}
\end{subequations}
where (\ref{MFG-system1}) is the continuity equation, and (\ref{MFG-system2}) is the HJE . It describes the evolution of the velocity field.
\begin{remark}

	\rm Equation \eqref{MFG-system} represents the mean field limit of finite players' interaction system~\cite{cardaliaguet_master_2015,lasry_mean_2007}. Let  $X\sim \rho$ denote a flow map, and $P(t, x)=\nabla\varphi(t, x)$. Then the mean field game dynamics represents
	\begin{equation*}
	\left\{\begin{split}
	&\frac{dX}{dt}=G_M(X)^{-1}P\\
	&\frac{dP}{dt}=\nabla_X\Big(-\frac{1}{2}(P, G_M(X)^{-1}P)+\frac{\delta}{\delta\rho(X)}\mathcal{F}(\rho)\Big)
	\end{split}\right.
	\end{equation*}
	Here $\mathcal{F}(\rho)$ refers to the mean field limit of the interaction energy among players.

\end{remark}

\begin{remark}
	\rm Typical finite player games may involve noisy individual motions. Suppose that the players are affected by i.i.d. Brownian motion $\sqrt{2\beta}dB_t$, where $\beta>0$ is a given diffusion constant. In this circumstance, the continuity equation constraint in \ref{MFG-Benemou} is substituted by Fokker--Planck equation:
	$$\rho_t+\nabla \cdot (\rho\vv)=\beta\triangle\rho.$$
	The Laplacian depicts the viscosity among particles during transport. 
\end{remark}

Next, let us review two MFG examples. One is regularized optimal transport, and the other is the MFG with interaction energy. Our inverse-problem models are designed for the target parameters in each of them. 
\begin{example}[Optimal transport with regularization]
	\label{ex:OT}
	
	\rm
	For some $F$ convex, consider the running cost $\mathcal{F}(\rho(\cdot,t))$ as
	\begin{equation*}
	\mathcal{F}(\rho(\cdot,t))=\int_{\mathbb{T}^d}F(\rho(x,t))dx,
	\end{equation*}
	where $\vm$ is the flux. Reformulate (\ref{MFG-Benemou}) as:
	\begin{equation}
	\label{optimal-transport}
	\begin{split}
	\underset{\vm,\rho}{\mbox{minimize}}\quad
	&
	\displaystyle{
		\int_0^T\int_{\mathbb{T}^d} \frac{1}{2}\frac{\vm^TG_M\vm}{\rho}+F(\rho) dxdt
	}\\
	\mbox{subject to\quad}&\rho_t+\nabla \cdot \vm=0\\
	&\rho(\cdot,0)=\rho_0\quad \rho(\cdot,T)=\rho_T.
	\end{split}
	\end{equation}
	When $F=0$ and $G_M=I$, (\ref{optimal-transport}) is exactly the classical optimal transport problem.
	It transfers one pile of mass 
	to another with the least kinetic energy. The two piles have the same total mass but different density distributions. The minimum kinetic energy of the transport is called the Wasserstein $L^2$ distance between $\rho_0, \rho_T$.
	In~\cite{li_parallel_2018}, $F(\rho)=\epsilon\rho^2/2 $ serves as a regularization, giving rise to strong convexity for optimal transport. With non-zero $F$, we call (\ref{optimal-transport}) regularized optimal transport.
\end{example}
\begin{remark}
	\rm In optimal transport problem (\ref{optimal-transport}), sometimes, there exists a non-trivial ground metric function $G_M$. Here $G_M$ is a metric function on the sample space,
	which depicts a distance function between two sufficiently adjacent points on the manifold surface. {It is a metric function on the sample space, depicting the geometric contour of the surface.}
	So $G_M$ is an important parameter in MFG, and we will learn this ground metric function from observed population agents' (particles') motions.
\end{remark}

\begin{example}[MFG with interaction energy]
	\label{e.x.:MMD}
	\rm
	
	Take $\mathcal{F}(\cdot)$ as a convolution functional, also named interaction energy:
	\begin{equation}
	\label{F-convlution}
	\mathcal{F}(\rho(\cdot,t))=\int_{\mathbb{T}^d}\frac{1}{2}\rho(x,t) (K*\rho)(x,t)dx,
	\end{equation}
	where $K*\rho$ is the convolution of $K$ with $\rho$ defined as:
	\begin{equation*}
	(K*\rho)(x,t)=\int_{\mathbb{T}^d}K(x,y)\rho(y,t)dy.
	\end{equation*}
	Then (\ref{MFG-Benemou}) can be written as:
	\begin{equation}
	\label{MMD-control}
	\begin{split}
	\underset{\vm,\rho}{\mbox{minimize\quad}}
	&
	\displaystyle{
		\int_0^T\int_{\mathbb{T}^d} \frac{1}{2}\frac{\vm^TG_M\vm}{\rho}+\frac{1}{2}\rho(x,t) (K*\rho)(x,t) dxdt
	}\\
	\mbox{s.t.\quad}&
	\rho_t+\nabla \cdot \vm=0\\
	&\rho(\cdot,0)=\rho_0,\quad\rho(\cdot,T)=\rho_T.
	\end{split}
	\end{equation}
	The interaction energy $\mathcal{F}(\rho)$ in (\ref{MMD-control}) is related to the MMD, which is a widely used divergence (objective) functional in machine learning problems.
	In practice, $\rho_0$ is source data, $\rho_T$ indicates target data. $K$ is a convolution kernel. It induces symmetry as:
	\begin{equation}
	\label{Conv_ker}
	K(x,y)=K(y,x)=\tilde{K}(|y-x|_{\mathbb{T}^d}),\quad x,y\in \mathbb{T}^d,
	\end{equation}
	for some $\tilde{K}:[0,1/2]^d\rightarrow \mathbb{R}$. We define $|y_i-x_i|_{\mathbb{T}}$ as the distance between $x_i$ and $y_i$ on the 1-dimensional torus $\mathbb{T}$, $|y-x|_{\mathbb{T}^d}=(|y_1-x_1|_{\mathbb{T}},|y_2-x_2|_{\mathbb{T}},\ldots,|y_n-x_n|_{\mathbb{T}})^T$. In physical interpretation, $\mathcal{F}(\rho(\cdot,t))$ is the total potential energy attained by the particles in density distribution $\rho$.
	
	In practice, $\tilde{K}$ is often taken as:
	\begin{equation}
	\label{kernel-exp}
	\tilde{K}(\vx)=\exp\left(-\frac{\vx^TA\vx}{\epsilon}\right),\quad \vx\in\left[0,\frac{1}{2}\right]^d,
	\end{equation}
	where  $A\succ 0$ is called the adaptation matrix and $\epsilon>0$ is a scaling parameter.
\end{example}

The convolution kernel $K(\cdot,\cdot)$ depicts the pairwise impact between the particles, which relies on nothing but the relative distance of any two players. Once the kernel is fully studied, we are able to tell the interactions between the players while scheduling routine, thus predicting the dynamics of players under different time boundary conditions. In Section \ref{chap:MFG-model}, we propose inverse models for the above examples.

\section{Mean-field game inverse model}
\label{chap:MFG-model}
In this section, an equivalent MFG PDE system is proposed. Based on the PDEs, we develop two inverse-problem models to recreate the important parameters in MFGs.

\subsection{Mean-field game system}
\label{chapter:MFG-sys}
Consider the potential MFG problem (\ref{MFG-Benemou}) on the $d$-dimensional torus $\mathbb{T}^d$ and time interval $[0,T]$. We derive an equivalent system of MFG by Theorem \ref{thm:MFG-system}. Before displaying it, we introduce the following integral path for simplification of statement. Let $\hat{x}^i$ denote $x$ with the $i^{th}$ element erased, $\hat{x}^i=(x_1,\ldots,x_{i-1},\hat{x}_i,x_{i+1},\ldots,x_d)\in \mathbb{T}^{d-1}$. Path $s_i(\hat{x}^i,\cdot)$ is defined as:
\begin{equation*}
s_i(\hat{x}^i,s)=(x_1,x_2,\ldots,x_{i-1},s,x_{i+1},\ldots,x_n),\quad s\in[0,1].
\end{equation*}
\begin{theorem}
	\label{thm:MFG-system}
	Suppose $\mathcal{G}$ in (\ref{MFG-Benemou}) is an indicator function of $\{\rho_T\}$. Assume (\ref{MFG-Benemou}) has a positive solution $\rho\in\mathcal{P}_+(\mathbb{T}^d\times [0,T])$, and the dual variable $\varphi$ has continuous second-order mixed derivative in $\mathbb{T}^d\times[0,T]$:
	\begin{equation*}
	\varphi_{x_i,x_j},\,
	\varphi_{x_i,t}\in C(\mathbb{T}^d\times[0,T]),\quad i\not=j.
	\end{equation*}
	Then, $(\rho,\vv)$ is the minimizer of (\ref{MFG-Benemou}) if and only if it is the solution of PDEs
	\begin{subequations}
		\label{MFG-conserv2}
		\begin{align}
		&\rho_t+\nabla\cdot(\rho \vv)=0\\
		&(G_M\vv)_t+\nabla\left(\frac{1}{2}\vv^T G_M\vv-\frac{\delta}{\delta\rho}\mathcal{F}(\rho)\right)=0\\
		&\frac{\partial (G_M\vv)_i}{\partial{x_j}}=\frac{\partial (G_M\vv)_j}{\partial{x_i}}, \quad i\not=j\\
		&\int_{s_i(\hat{x}^i,\cdot)} (G_M\vv)_i dS_x=0,\quad\hat{x}^i\in\mathbb{T}^{d-1}, i=1,2,\ldots,d,
		\end{align}
	\end{subequations}
	where boundaries $\rho(\cdot,0),\rho(\cdot,T)$ are set to $\rho_0$ and $\rho_T$.
\end{theorem}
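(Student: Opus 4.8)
The plan is to reduce everything to the equivalence, established in the discussion preceding the theorem, between $(\rho,\vv)$ being a minimizer of (\ref{MFG-Benemou}) and the triple $(\rho,\vm,\varphi)$ with $\vm=\rho\vv$ solving the KKT system (\ref{MFG-KKT}) for some dual variable $\varphi$ of the stated regularity. Writing (\ref{MFG-KKT1}) as $G_M\vv=\nabla\varphi$, one sees that the system (\ref{MFG-conserv2}) is exactly (\ref{MFG-KKT}) with the potential $\varphi$ eliminated: the curl identity and the path-integral conditions in (\ref{MFG-conserv2}) together are precisely the statement that the field $G_M\vv$ admits a single-valued potential on the torus $\mathbb{T}^d$. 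So the core of the argument is a Poincar\'e-lemma computation, complicated by the fact that $\mathbb{T}^d$ is not simply connected. I would prove the two implications separately.

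\emph{From minimizer to PDE.} Suppose (\ref{MFG-KKT}) holds with some $\varphi$ having continuous second-order mixed derivatives. Then the continuity equation in (\ref{MFG-conserv2}) is (\ref{MFG-KKT3}). From $G_M\vv=\nabla\varphi$ and the symmetry of mixed partials, $\partial_{x_j}(G_M\vv)_i=\partial_{x_i x_j}\varphi=\partial_{x_i}(G_M\vv)_j$, which is the curl identity; and for each $i$, integrating $(G_M\vv)_i=\partial_{x_i}\varphi$ along the path $s_i(\hat x^i,\cdot)$ gives $\varphi$ evaluated at $x_i=1$ minus at $x_i=0$, which vanishes because $\varphi(\cdot,t)$ is $\mathbb{Z}^d$-periodic; this is the path-integral condition. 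Finally, taking $\nabla$ of (\ref{MFG-KKT2}) and using $\partial_t(G_M\vv)=\nabla\varphi_t$ (again by equality of mixed partials) produces the second equation of (\ref{MFG-conserv2}).

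\emph{From PDE to minimizer.} This is the substantive direction: I must reconstruct $\varphi$. Fix $t$ and set $\vw=G_M\vv$. By the curl identity in (\ref{MFG-conserv2}), $\vw(\cdot,t)$ is closed, so on the simply connected cover $\mathbb{R}^d$ the line integral $\varphi(x,t)=\int_\gamma \vw(\cdot,t)\cdot d\ell$ from a fixed base point to $x$ is path-independent and satisfies $\nabla\varphi=\vw$, i.e.\ (\ref{MFG-KKT1}). The increment of $\varphi$ across the $i$-th generating loop of $\mathbb{T}^d$ equals $\int_0^1 w_i$ along $s_i(\hat x^i,\cdot)$ --- a quantity that does not depend on the transverse coordinates precisely because $\vw$ is closed --- and this is zero by the path-integral condition in (\ref{MFG-conserv2}); hence $\varphi(\cdot,t)$ descends to $\mathbb{T}^d$. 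Next, combining the second equation of (\ref{MFG-conserv2}) with $\partial_t(G_M\vv)=\nabla\varphi_t$ shows that $\nabla\big(\varphi_t+\tfrac12\vv^TG_M\vv-\tfrac{\delta}{\delta\rho}\mathcal{F}(\rho)\big)=0$ on $\mathbb{T}^d$, so the bracket is a function $c(t)$ of time alone; replacing $\varphi(x,t)$ by $\varphi(x,t)-\int_0^t c(\tau)\,d\tau$ does not change $\nabla\varphi$ and removes $c(t)$, yielding (\ref{MFG-KKT2}). The continuity equation in (\ref{MFG-conserv2}) is (\ref{MFG-KKT3}), so the full KKT system holds; since $\varphi$ inherits the required regularity from that of $G_M\vv$ (part of the standing smoothness assumptions), the convexity argument recalled above gives that $(\rho,\vv)$ minimizes (\ref{MFG-Benemou}).

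\emph{Main obstacle.} The delicate point is topological: closedness of $G_M\vv$ alone does \emph{not} yield a globally single-valued potential on $\mathbb{T}^d$, and one must recognize that the $d$ path-integral conditions in (\ref{MFG-conserv2}) are exactly the missing period (circulation) constraints, while simultaneously using closedness to see that each such loop integral is independent of the transverse variables $\hat x^i$; thus the curl identity and the path-integral conditions are interlocked and only jointly equivalent to exactness. Beyond this, the routine care lies in justifying the interchange of mixed partial derivatives (hence the hypothesis on $\varphi$, equivalently on $G_M\vv$) and in the harmless time-dependent additive normalization of $\varphi$.
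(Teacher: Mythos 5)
Your proposal is correct and follows essentially the same route as the paper: reduce to the KKT/MFG system via convexity in the variables $(\rho,\vm)$ and positivity of $\rho$, then show that the curl identity together with the $d$ circulation conditions on $\mathbb{T}^d$ is exactly what is needed to eliminate (respectively reconstruct) the potential $\varphi$ from $\vw=G_M\vv$. You actually supply more detail than the paper in the reverse direction — the explicit line-integral construction of $\varphi$ on the cover and the absorption of the time-dependent constant $c(t)$ into $\varphi$ — which the paper's proof asserts only implicitly when it claims the two systems ``share identical solutions.''
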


\begin{remark}
	\rm Theorem \ref{thm:MFG-system} claims the equivalence between (\ref{MFG-Benemou}) and (\ref{MFG-conserv2}). The deduction allows the MFG optimal solution to be represented by PDE constraints only with density distribution and velocity field $\rho,\vv$. Based on this formulation, we develop an inverse problem to recreate the  parameters, such as ground metrics or kernels, with feasible observations.
\end{remark}

Now let us give the explicit form of $\delta \mathcal{F}(\rho)/\delta\rho$ in specific cases. In Example \ref{ex:OT},
\begin{equation*}
\frac{\delta \mathcal{F}(\rho)}{\delta\rho}=F'(\rho).
\end{equation*}
In Example \ref{e.x.:MMD}, the $L^2$ variation to the MMD regularization can be written as:
\begin{equation*}
\frac{\delta \mathcal{F}(\rho)}{\delta\rho}=K*\rho,
\end{equation*}
which is the convolution of $K$ and $\rho$.    The rigorous proof of Theorem \ref{thm:MFG-system} is given in Subsection \ref{chap:proof-MFG-sys}. 

\subsection{Inverse model for ground metric}
We first look into the inverse model for Example \ref{ex:OT}. In this case, we are aiming at recreating ground metric $G_M(x)$, which only depends on the spatial location. For $G_M$, we further assume there exists a metric kernel $g_0:\mathbb{T}^d\rightarrow\mathbb{R}$ such that:
\begin{equation}
\label{Metric_ker}
G_M=(g_{ij})_{d\times d}=(f_{ij}(g_0))_{d\times d},
\end{equation}
with mappings $f_{ij}:\mathbb{R}\rightarrow\mathbb{R}, f_{ij}=f_{ji}$ given. Note that $f_{ij}$ are functions not explicitly depending on location $x$. The selection of $f_{ij}$ is flexible. They can be linear or non-linear functions. We have introduced $g_0$ to replace $G_M$, and this has reduced the dimension of the unknowns.
Now our inverse problem model is to learn the metric kernel $g_0$.
\begin{model}[Inverse Model for Ground Metric]
	\label{inv-ground-metric}
	\rm In (\ref{optimal-transport}), let $F:\mathcal{P}(\mathbb{T}^d)\rightarrow \mathbb{R}$ be known, and the ground $G_M$ can be represented by metric kernel $g_0$ as in (\ref{Metric_ker}). Then using the observations $\hat{\rho},\hat{\vv}$ of density distribution, velocity field, and the boundary observations $\hat{\rho}_0,\hat{\rho}_T$ from mean field games, we can define the following optimization problem:
	\begin{equation}
	\label{MFG-inv1}
	\begin{split}
	&\operatorname*{min}_{g_0,\rho,\vv}
	\frac{\alpha}{2}\|\rho-\hat{\rho}\|^2+\frac{\beta}{2}\|\vv-\hat{\vv}\|^2
	+\frac{\alpha_0}{2}
	\left(
	\|\rho_0-\hat{\rho}_0\|^2+\|\rho_T-\hat{\rho}_T\|^2
	\right)
	+\frac{\gamma}{p}\|\nabla g_0\|_p^p\\
	&\mbox{s.t.}
	\left\{
	\begin{split}
	&\rho_t+\nabla\cdot(\rho \vv)=0\\
	&(G_M\vv)_t+\nabla\left(\frac{1}{2}\vv^T G_M\vv-F'(\rho)\right)=0\\
	&\frac{\partial (G_M\vv)_i}{\partial{x_j}}=\frac{\partial (G_M\vv)_j}{\partial{x_i}}, \quad i\not=j\\
	&\int_{s_i(\hat{x}^i,\cdot)} (G_M\vv)_i dS_x=0,\quad \hat{x}^i\in\mathbb{T}^{d-1} ,i=1,2,\ldots,d
	\end{split}
	\right.
	\end{split}
	\end{equation}
	where $\|\cdot\|^2$ is the squared $L^2$ norm in the corresponding space ($\rho,\vv$ in $\mathbb{T}^d\times[0,T]$, while $\rho_0,\rho_T$ in $\mathbb{T}^d$). $\|\nabla g_0\|_p^p$ is the $H^p$ norm of the metric kernel $g_0$
	over $\mathbb{T}^d$, $p\geq1$. Parameters $\alpha,\alpha_0,\beta,\gamma$ are scaling indices.
\end{model}
\begin{remark}
	\rm In the model, the value of $p$ can be selected according to the property of $g_0$. If $g_0$ is smooth, we let $p=2$. When $g_0$ is sparse, we usually choose $p=1$, giving rise to the \emph{total variation} (TV) regularization. 
\end{remark}
\begin{remark}[Mean field observation]
	\rm In practice, there are also many alternative options for the objective function to regularize the distance between $(\rho,\vv)$ and $(\hat{\rho},\hat{\vv})$. For example, one can replace the $L^2$ norm of vector fields $\|\vv-\hat \vv\|^2$ by $\int\int\hat\rho(t,x) \|\vv(t,x)-\hat \vv(t,x)\|^2/2\,dxdt$. In this formulation, one only needs to fit the vector fields based on the current observation of density. Then, we replace the $L^2$ norm of density distribution $\|\rho-\hat{\rho}\|^2$ by Kullback-Leibler (KL) divergence, and the objective function becomes:
	\begin{equation}
	\label{kinetic-KL}
	\small{\begin{split}
		&\alpha\int_0^T\int_{\mathbb{T}^d}\rho \log\frac{\rho}{\hat{\rho}}\,dx dt+\int_0^T\int_{\mathbb{T}^d}\frac{\hat\rho}{2} \|\vv-\hat \vv\|^2\,dxdt
		+\alpha_0 \left(\int_{\mathbb{T}^d}\rho_0 \log\frac{\rho_0}{\hat{\rho}_0}\,dx
		+\int_{\mathbb{T}^d}\rho_T \log\frac{\rho_T}{\hat{\rho}_T}\,dx\right)+\frac{\gamma}{p}\|\nabla g_0\|_p^p.
		\end{split}}
	\end{equation}
	In this scheme,  $\int_{\mathbb{T}^d} \rho(x,t) dx=1$, where $\rho$ is a probability measure.
\end{remark}
\begin{remark}
	\rm It is also worth mentioning that there are other important regularizations in practice, for example, the $L^p$--Wasserstein metrics. 
\end{remark}
\subsection{Inverse model for convolution kernel}
In this section, we introduce another inverse model for Example \ref{e.x.:MMD}, which is to learn the kernel in interaction energy.

\begin{model}[Inverse Model for Convolution Kernel]
	\label{inv-MMD}
	\rm In (\ref{MMD-control}), suppose the ground metric $G_M$ is known. The convolution kernel is symmetric as in (\ref{Conv_ker}). The observations are density distribution $\hat{\rho}$, velocity field $\hat{\vv}$, and boundary observation $\hat{\rho}_0,\hat{\rho}_T$ from the numerical result of a single forward MFG problem. We design the inverse optimization model to learn the kernel $\tilde{K}$ over $[0,1/2]^d$:
	\begin{equation}
	\label{MFG-inv2}
	\begin{split}
	&\operatorname*{min}_{\tilde{K},\rho,\vv}
	\frac{\alpha}{2}\|\rho-\hat{\rho}\|^2+\frac{\beta}{2}\|\vv-\hat{\vv}\|^2
	+\frac{\alpha_0}{2}
	\left(
	\|\rho_0-\hat{\rho}_0\|^2+\|\rho_T-\hat{\rho}_T\|^2
	\right)
	+\frac{\gamma}{p}\|\nabla \tilde{K}\|_p^p\\
	&\mbox{s.t.}
	\left\{
	\begin{split}
	&\rho_t+\nabla\cdot(\rho \vv)=0\\
	&(G_M\vv)_t+\nabla\left(-K*\rho+\frac{1}{2}\vv^TG_M\vv \right)=0\\
	&\frac{\partial (G_M\vv)_i}{\partial{x_j}}=\frac{\partial (G_M\vv)_j}{\partial{x_i}}, \quad i\not=j\\
	&\int_{s_i(\hat{x}^i,\cdot)} (G_M\vv)_i dS_x=0,\quad \hat{x}^i\in\mathbb{T}^{d-1}, i=1,2,\ldots,d
	\end{split}
	\right.
	\end{split}
	\end{equation}
	where $\|\cdot\|^2$ shares the same definition as in (\ref{MFG-inv1}), and $\|\nabla \tilde{K}\|_p^p$ is the $H^p$ regularization on the kernel $\tilde{K}$ over $[0,1/2]^d$, $p\geq1, p\in\mathbb{Z}$.
\end{model}
\begin{remark}
	\rm We note that, due to the symmetry of $K$ on the torus, we simply study $\tilde{K}$ over $[0,1/2]^d$. Similar to Model \ref{inv-ground-metric}, $p$ can be selected as different positive integers based on the property of $K$. Since $\tilde{K}$ is often taken in the exponential quadratic format (\ref{kernel-exp}) in MFG interaction, we always assume $\tilde{K}$ is smooth as a priori and $p=2$ is a common choice for the regularization on the kernel. 
\end{remark}

\subsection{KKT condition for inverse model}
In this subsection, we show the KKT condition of the optimization problem (\ref{MFG-inv1}) in Model \ref{inv-ground-metric}.

We consider the optimization problem (\ref{MFG-inv1}) in 1-dimensional space, i.e. $d=1$. Taking $G_M=g_0$, $p=2$,  we have the theorem below.

\begin{theorem}
	Consider the optimization problem:
	\begin{subequations}
		\label{inverse-metric}
		\begin{gather}
		\label{inverse-metric1}
		\operatorname*{minimize}_{G_M,\rho,v}
		\mathcal{J}(\rho,v,\rho_0,\rho_T;\hat{\rho},\hat{v},\hat{\rho}_0,\hat{\rho}_T)
		+\frac{\gamma}{2}\| (G_M)_x\|_2^2\\
		\label{inverse-metric2}
		\mbox{s.t.}
		\left\{
		\begin{split}
		&\rho_t+(\rho v)_x=0\\
		&(G_M v)_t+\left(\frac{1}{2} G_M v^2-F'(\rho)\right)_x=0\\
		&\int_{\mathbb{T}}G_Mv\, dx=0
		\end{split}
		\right.
		\end{gather}
	\end{subequations}
	$\mathcal{J}$ is the regularization on $\rho,v$, as in objective function of (\ref{MFG-inv1}), or the regularization in (\ref{kinetic-KL}). Suppose  $\frac{\delta}{\delta \rho}\mathcal{J},
	\frac{\delta}{\delta v}\mathcal{J}$ exist, assume $\rho\in\mathcal{P}_+$ in the model. Denote the Lagrangian multiplier as $(\Phi,\psi)$. Then the minimizer $(\rho^*,v^*,G^*_M)$ of (\ref{MFG-inv1}) with certain multiplier $(\Phi^*,\psi^*)$ solves the following PDEs,
	\begin{subequations}
		\label{inverse-KKT}
		\begin{align}
		\label{inverse-KKT1}
		&\begin{cases}
		\frac{\delta}{\delta\rho}\mathcal{J}-\Phi_t-\Phi_x v+\psi_x F''(\rho)=0\\
		\frac{\delta}{\delta v}\mathcal{J}-\psi_t G_M-\Phi_x\rho-\psi_x G_Mv=0\\
		-\gamma (G_M)_{xx}-\int_0^T\psi_t v\,dt -\int_0^T\frac{1}{2}\psi_xv^2\,dt+\left.\psi v\right|_{t=0}^T=0
		\end{cases}
		\\
		\label{inverse-KKT2}
		&\begin{cases}
		\frac{\delta}{\delta\rho_0}\mathcal{J}-\Phi(x,0)=0\\
		\frac{\delta}{\delta\rho_T}\mathcal{J}+\Phi(x,T)=0
		\end{cases}
		\\
		\label{inverse-KKT3}
		&
		\psi(x,0)=\psi(x,T)=0
		\\
		\label{inverse-KKT4}
		& \mbox{constraints in (\ref{inverse-metric2})}.
		\end{align}
	\end{subequations}

\end{theorem}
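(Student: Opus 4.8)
The plan is to obtain \eqref{inverse-KKT} as the first-order (Euler--Lagrange) conditions of the PDE-constrained problem \eqref{inverse-metric}, in the same spirit as the derivation of \eqref{MFG-conserv2} from \eqref{MFG-Benemou} in Theorem \ref{thm:MFG-system}, but now treating $G_M$ (equivalently $g_0$) as an extra free field, keeping the fidelity/regularization functional $\mathcal{J}$ symbolic, and working in one spatial dimension. First I would pair the two PDE constraints of \eqref{inverse-metric2} with multipliers $\Phi$ and $\psi$ on $\mathbb{T}\times[0,T]$ and retain the scalar integral constraint $\int_{\mathbb{T}}G_M v\,dx=0$ as primal feasibility; note that integrating the second PDE over $x\in\mathbb{T}$ and using periodicity shows $\tfrac{d}{dt}\int_{\mathbb{T}}G_M v\,dx=0$, so this last constraint is preserved in time and does not contribute an additional active multiplier, which is why only $(\Phi,\psi)$ survive in the statement. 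This gives the Lagrangian
\begin{equation*}
\begin{split}
L&=\mathcal{J}(\rho,v,\rho_0,\rho_T;\hat\rho,\hat v,\hat\rho_0,\hat\rho_T)+\frac{\gamma}{2}\int_{\mathbb{T}}(G_M)_x^2\,dx
+\int_0^T\int_{\mathbb{T}}\Phi\big(\rho_t+(\rho v)_x\big)\,dx\,dt\\
&\quad+\int_0^T\int_{\mathbb{T}}\psi\Big((G_Mv)_t+\big(\tfrac12 G_Mv^2-F'(\rho)\big)_x\Big)\,dx\,dt .
\end{split}
\end{equation*}

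Next I would integrate by parts to move all derivatives off the state fields onto $\Phi$ and $\psi$: spatial boundary terms vanish by periodicity on $\mathbb{T}$, leaving the temporal boundary terms $\int_{\mathbb{T}}\Phi\rho\,dx\big|_{t=0}^{t=T}$ and $\int_{\mathbb{T}}\psi G_M v\,dx\big|_{t=0}^{t=T}$. Since $\rho\in\mathcal{P}_+$ is strictly positive the minimizer is interior to the positivity constraint, so (as in Theorem \ref{thm:MFG-system}) no multiplier is needed for it, and $\mathcal{J}$ enters only through its $L^2$ variations, which exist by hypothesis. Setting the first variations of $L$ with respect to the interior fields to zero then yields: the $\rho$-variation gives $\tfrac{\delta}{\delta\rho}\mathcal{J}-\Phi_t-\Phi_x v+\psi_x F''(\rho)=0$; the $v$-variation gives $\tfrac{\delta}{\delta v}\mathcal{J}-\psi_t G_M-\Phi_x\rho-\psi_x G_M v=0$; and — because $G_M$ depends on $x$ only, so its variation is taken after integrating $L$ over $t$, which both produces the $\int_0^T(\cdot)\,dt$ structure and leaves the extra term $\psi v\big|_{t=0}^{t=T}$ — the $G_M$-variation gives $-\gamma(G_M)_{xx}-\int_0^T\psi_t v\,dt-\int_0^T\tfrac12\psi_x v^2\,dt+\psi v\big|_{t=0}^{t=T}=0$. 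These are precisely the three bulk equations \eqref{inverse-KKT1}.

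It then remains to read off the boundary conditions from the temporal boundary terms. The traces $\rho_0=\rho(\cdot,0)$ and $\rho_T=\rho(\cdot,T)$ are free and appear both in $\mathcal{J}$ and in $\int_{\mathbb{T}}\Phi\rho\,dx\big|_{t=0}^{t=T}$, so vanishing of their variations gives $\tfrac{\delta}{\delta\rho_0}\mathcal{J}-\Phi(x,0)=0$ and $\tfrac{\delta}{\delta\rho_T}\mathcal{J}+\Phi(x,T)=0$, i.e.\ \eqref{inverse-KKT2}. The values $v(\cdot,0),v(\cdot,T)$ are otherwise unconstrained, so the coefficients of their variations in $\int_{\mathbb{T}}\psi G_M v\,dx\big|_{t=0}^{t=T}$ must vanish; since $G_M>0$ this forces $\psi(x,0)=\psi(x,T)=0$, which is \eqref{inverse-KKT3}. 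Feasibility of the constraints is \eqref{inverse-KKT4}. Finally one specializes $\tfrac{\delta}{\delta\rho}\mathcal{J},\tfrac{\delta}{\delta v}\mathcal{J},\tfrac{\delta}{\delta\rho_0}\mathcal{J},\tfrac{\delta}{\delta\rho_T}\mathcal{J}$ to the explicit objective in \eqref{MFG-inv1} or to the KL-type objective \eqref{kinetic-KL}.

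The integration-by-parts bookkeeping is routine; the genuine obstacle is the one standard for Lagrange-multiplier theorems in a non-convex, PDE-constrained setting, namely guaranteeing that a multiplier pair $(\Phi^*,\psi^*)$ actually exists at the minimizer. The constraints in \eqref{inverse-metric2} are bilinear in $(\rho,v)$ and in $(G_M,v)$, so the feasible set is not convex and existence of multipliers needs a constraint qualification — for instance surjectivity of the linearized constraint operator at $(\rho^*,v^*,G_M^*)$, for which the non-degeneracies $\rho^*>0$ and $G_M^*\succ 0$ and sufficient regularity of $(\rho^*,v^*,G_M^*,\Phi^*,\psi^*)$ are the natural hypotheses. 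I would either invoke such a qualification or, as is done for Theorem \ref{thm:MFG-system}, present \eqref{inverse-KKT} as the formal necessary condition produced by the calculus of variations. A secondary point requiring care is the integral constraint: one should verify that, thanks to the conservation $\tfrac{d}{dt}\int_{\mathbb{T}}G_M v\,dx=0$, it can be kept as pure feasibility without introducing a nonzero multiplier, so that the clean boundary relations $\psi(x,0)=\psi(x,T)=0$ and the absence of any extra $G_M$-proportional term in the $v$-equation indeed hold.
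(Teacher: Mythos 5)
Your proposal is correct and follows essentially the same route as the paper: form the Lagrangian with multipliers $\Phi,\psi$ for the two PDE constraints, integrate by parts onto the dual variables (periodicity killing the spatial boundary terms), and read off \eqref{inverse-KKT1}--\eqref{inverse-KKT3} from the first variations with respect to $\rho,v,G_M$ in the bulk and $\rho_0,\rho_T,v(\cdot,0),v(\cdot,T)$ at the temporal boundary. The paper simply omits the integral constraint from its Lagrangian without comment and presents the result as a formal necessary condition, so your added remarks on the conservation of $\int_{\mathbb{T}}G_Mv\,dx$ and on constraint qualification go beyond, but do not diverge from, its argument.
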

\begin{proof}
	\rm The Lagrangian of (\ref{inverse-metric}) can be written as:
	\begin{equation*}
	\begin{split}
	&\operatorname*{min}_{\rho,v,G_M}\operatorname*{max}_{\Phi,\psi}
	\mathcal{J}(\rho,v,\rho_0,\rho_T;\hat{\rho},\hat{v},\hat{\rho}_0,\hat{\rho}_T)
	+\frac{\gamma}{2}\|(G_M)_x\|_2^2+
	\int_0^T\int_{\mathbb{T}}
	\Phi\left(
	(\rho v)_x+\rho_t
	\right)
	\\
	&\qquad
	+\psi
	\left(
	(G_M v)_t + \left(\frac{1}{2}G_M v^2-F'(\rho)\right)_x
	\right)\,dxdt\\
	=&\operatorname*{min}_{\rho,v,G_M}\operatorname*{max}_{\Phi,\psi}
	\mathcal{J}(\rho,v,\rho_0,\rho_T;\hat{\rho},\hat{v},\hat{\rho}_0,\hat{\rho}_T)
	+\frac{\gamma}{2}\|(G_M)_x\|_2^2+\int_0^T\int_{\mathbb{T}}
	-\Phi_x \rho v-\Phi_t \rho
	-\psi_t G_M v
	\\
	&\qquad
	-\psi_x
	\left(
	\frac{1}{2}G_M v^2-F'(\rho)
	\right)\,dxdt
	+
	\left.\int_{\mathbb{T}^d}\Phi\rho+\psi G_Mv\, dx
	\right|_{t=0}^T.
	\end{split}
	\end{equation*}
	By computing $L^2$ first variation to $\rho,v$ over $\mathbb{T}\times(0,T)$, $G_M$ over $\mathbb{T}$, we have (\ref{inverse-KKT1}). By computing derivative of $\rho_0,\rho_T$ over $\mathbb{T}\times\{0\},\mathbb{T}\times\{T\}$, (\ref{inverse-KKT2}) can be deduced. (\ref{inverse-KKT3}) comes from the derivative to $v(x,0),v(x,T)$ over $\mathbb{T}$. (\ref{inverse-KKT4}) is the original constraints.
\end{proof}

\subsection{Proof of Theorem \ref{thm:MFG-system}}
\label{chap:proof-MFG-sys}

\begin{proof}[Proof of Theorem \ref{thm:MFG-system}]
	\rm Once the solution to (\ref{MFG-Benemou}) is strictly positive, we have the KKT condition of (\ref{MFG-Benemou}) as (\ref{MFG-system}). Due to the convexity of (\ref{MFG-Benemou}) by taking $\vm=\rho\vv$ and the assumption that $\rho$ being positive, minimizer of the optimization problem equals to the solution of PDE system (\ref{MFG-system}). Thus our goal reduces to prove that (\ref{MFG-system}) and (\ref{MFG-conserv2}) share identical solutions under the continuous assumption of $\varphi$.

	Note $\vw=\nabla\varphi$. The HJE (\ref{MFG-system2}) can be written as:
	\begin{equation*}
	\begin{split}
	\frac{\delta}{\delta\rho}\mathcal{F}(\rho)-\frac{1}{2}\vw^TG_M^{-1}\vw=\varphi_t.
	\end{split}
	\end{equation*}
	Let
	\begin{equation*}
	\xi:=\frac{1}{2}\vw^TG_M^{-1}\vw-\frac{\delta}{\delta\rho}\mathcal{F}(\rho).
	\end{equation*}
	Since $\varphi$ has a second-order mixed derivative in $\mathbb{T}^d\times [0,T]$, we can substitute HJE with
	\begin{equation*}
	\vw_t+\nabla\xi=0,\quad \frac{\partial w_i}{\partial x_j}=\frac{\partial w_j}{\partial x_i},\quad i\not=j,
	\end{equation*}
	where $w_i,w_j$ are the $i^{th}$ and $j^{th}$ component of $\vw$. The MFG system is transformed into:
	\begin{equation}
	\label{MFG-conserv}
	\left\{
	\begin{split}
	&\rho_t+\nabla\cdot(\rho G_M^{-1}\vw)=0\\
	&\vw_t+\nabla\left(\frac{1}{2}\vw^TG_M^{-1}\vw-\frac{\delta}{\delta\rho}\mathcal{F}(\rho)\right)=0\\
	&\frac{\partial w_i}{\partial{x_j}}=\frac{\partial w_j}{\partial{x_i}}, \quad i\not=j
	\end{split}
	\right.
	\end{equation}
	Due to non-simple connection of the domain $\mathbb{T}^d$, for compatibility, the integral of $w_i$ on path $s_i(\hat{x}^i,\cdot)$ equals to $0$:
	\begin{equation*}
	\int_{s_i(\hat{x}^i,\cdot)} w_i dS_x=0,\quad \hat{x}^i\in\mathbb{T}^{d-1},i=1,2,\ldots,d.
	\end{equation*}
	Since
	\begin{equation*}
	\vv=\frac{\vm}{\rho}=G_M^{-1}\nabla\varphi=G_M^{-1}\vw,
	\end{equation*}
	substituting $\vw$ with $G_M\vv$, (\ref{MFG-conserv}) can be reformulated into (\ref{MFG-conserv2}). Thus $(\rho,\vv)$ is a solution to (\ref{MFG-system}) if and only if it solves (\ref{MFG-conserv2}). The theorem holds.
\end{proof}

\begin{remark}
	\rm We notice that if $\mathcal{G}$ is an indicator function, the forward problem forms the classical dynamical optimal transport problem, and our inverse model forms the inverse dynamical optimal transport. When $\mathcal{G}$ is a general functional, then our forward problem forms the classical mean field game problem, and our proposed model is the inverse mean field game problem. We also emphasize that above models share the same PDE system, expect for different boundary conditions on both initial and terminal time.
\end{remark}

\section{Discretization and rigorous treatment}
\label{chap:disc}
In this section, we derive the discrete format of the inverse problem in adaptation to the discrete forward problem. Furthermore, the primal-dual algorithm to solve the discrete inverse problem is provided.

\subsection{Discretization}
\label{chap:disc-treat}
In computation, we have to deal with the MFG problem in the discrete format. In this subsection, we use a finite volume discretization to approximate the continuous problem on the grid. Without loss of generality, all the work is done in 2 dimensions.

We discretize the problem on our dual variable $\varphi$. Consider an $m\times m\times n$ discretization on the torus $\mathbb{T}^2\times[0,T]$. Here $m\times m$ is the size for spatial discretization, while $n$ is the size for time discretization. We approximate the space-time domain $\mathbb{T}^d\times [0,T]$ with points $\{x_1,x_2,\ldots,x_m\}\times\{y_1,y_2,\ldots,y_m\}\times\{z_1,z_2,\ldots,z_n\}$. Take $\triangle x, \triangle t$ as the size of spatial-time element. Define the cube as:
\begin{equation*}
C(x,y,z)=\{(x',y')\in \mathbb{T}^2, z'\in[0,T] \mid |x-x'|_{\mathbb{T}}\leq \frac{\triangle x}{2},|y-y'|_{\mathbb{T}}\leq \frac{\triangle x}{2},|z-z'|\leq \frac{\triangle t}{2}
\}.
\end{equation*}
We further define the 2-dimensional box:
\begin{equation*}
C(x,y)=\{(x',y')\in \mathbb{T}^2 \mid |x-x'|_{\mathbb{T}}\leq \frac{\triangle x}{2},|y-y'|_{\mathbb{T}}\leq \frac{\triangle x}{2}
\}.
\end{equation*}
For simplicity, let $V$ denote the set of subscripts for all the grid points in space, which is modulo $m$ in the torus topology:
\begin{equation*}
V=\{1,2,\ldots m\}\times\{1,2,\ldots m\}.
\end{equation*}
Here $(i_1,i_2),(i'_1,i'_2)\in \mathbb{Z}^2$ represent an identical point in $V$ if $i_1\equiv i'_1(\mod m), i_2\equiv i'_2(\mod m)$. We define $\tilde{V}$ as:
\begin{equation*}
\tilde{V}=\{0,1,2,\ldots,\lfloor\frac{m}{2}\rfloor\}\times\{0,1,2,\ldots,\lfloor\frac{m}{2}\rfloor\},
\end{equation*}
which is our discretization of $[0,1/2]^2$.
Let $e_v$ be the unit vector in positive direction for each axis. In 2 dimensions, there are two such vectors: $e_1=(1,0), e_2=(0,1)$.
Slightly abusing the notation, we define:
\begin{align*}
&C(i,j)=C(x_{i_1},y_{i_2},z_j),\quad i\in V,j=1,2,\ldots,n,\\
&C(i,j-\frac{1}{2})=C(x_{i_1},y_{i_2},z_j-\frac{\triangle t}{2}),\quad i\in V,j=1,2,\ldots,n,\\
&C(i,n+\frac{1}{2})=C(x_{i_1},y_{i_2},z_n+\frac{\triangle t}{2}),\\
&C(i+\frac{e_v}{2},j)=C((x_{i_1},y_{i_2})+\triangle x e_v/2,z_j),\quad i\in V,j=1,2,\ldots,n,\\
&C(i)=C(x_{i_1},y_{i_2}),\quad i\in V,
\end{align*}
where $i=(i_1,i_2)$ contains 2 elements.

Next we define $\vm=(m_x,m_y)^T,\rho,\vv,\varphi,G_M$ in the discrete sense:
\begin{align*}
&\varphi_{i,j}=\frac{\int_{C(i,j)}\varphi(x,t)dxdt}{\mathrm{Vol}(C(i,j))},\quad i\in V,j=1,2,\ldots n,\\
&\rho_{i,j-\frac{1}{2}}=\frac{\int_{C(i,j-\frac{1}{2})}\rho(x,t)dxdt}{\mathrm{Vol}(C(i,j-\frac{1}{2}))},\quad i\in V,j=1,2,\ldots n+1,\\
&
\left(
\begin{array}{c}
m_{x,i+e_1/2,j}\\
m_{y,i+e_2/2,j}
\end{array}
\right)
=
\left(
\begin{array}{c}
\frac{\int_{C(i+e_1/2,j)} m_x(x,t)dxdt}{\mathrm{Vol}(C(i+e_1/2,j))}\\\begin{array}{c}
\frac{\int_{C(i+e_2/2,j)} m_y(x,t)dxdt}{\mathrm{Vol}(C(i+e_2/2,j))}
\end{array}
\end{array}
\right),
\quad i\in V,j=1,2,\ldots n,\\
&v_{x,i+e_1/2,j}=\frac{m_{x,i+e_1/2,j}}{\rho_{i,j-\frac{1}{2}}},\quad v_{y,i+e_2/2,j}=\frac{m_{y,i+e_2/2,j}}{\rho_{i,j-\frac{1}{2}}}, \quad i\in V,j=1,2,\ldots n,\\
&G_{M,i}=\frac{\int_{C(i)}G_M(x)dx}{\mathrm{Area}(C(i))},\quad i\in V,\quad K_{i,i'}=\frac{\int_{C(i)}\int_{C(i')}K(x,y)dxdy}{\mathrm{Area}(C(i))\cdot \mathrm{Area}(C(i'))},\quad i,i'\in V,
\end{align*}
where $\mathrm{Vol}(\cdot)$ is the volume of a cube, and $\mathrm{Area}(\cdot)$ is the area of a box.
Due to the symmetry of convolution kernel $K(\cdot,\cdot):\mathbb{T}^d\times\mathbb{T}^d\rightarrow \mathbb{R}$, it is safe for us to study $\tilde{K}(\cdot):\left[0,1/2\right]^d\rightarrow \mathbb{R}$ only. Let us denote the discretization of the single argument convolution kernel as:
\begin{equation}
\label{disc-K-tildeK}
\tilde{K}_{i}=K_{j,i+j},\quad i\in\tilde{V},j\in V.
\end{equation}
The selection of $j$ is arbitrary.
Furthermore, without causing confusion, we note
\begin{gather*}
m_{i+e_1/2,j}:=m_{x,i+e_1/2,j},\quad
m_{i+e_2/2,j}:=m_{y,i+e_2/2,j},\\
v_{i+e_1/2,j}:=v_{x,i+e_1/2,j},\quad
v_{i+e_2/2,j}:=v_{y,i+e_2/2,j},
\end{gather*}
to simplify the symbols.
We take the discrete format of $\rho_t$ and $\nabla\cdot\vm$ as:
\begin{align*}
(\rho_t)_{ij}=\left(\rho_{i,j+\frac{1}{2}}-\rho_{i,j-\frac{1}{2}}\right)/\triangle t,\quad
(\nabla\cdot\vm)_{ij}=\sum_{e_v}\left(m_{i+\frac{e_v}{2},j}-m_{i-\frac{e_v}{2},j}\right)/\triangle x.
\end{align*}
Set the discretization of $\int_0^T\mathcal{F}(\rho(\cdot,t))dt$ as
$\sum_{j=2}^n\mathcal{F}(\{\rho_{\cdot,j-\frac{1}{2}}\})\triangle t.$
Take $\mathcal{G}$ as indicator function. It can be relaxed into the constraint, in discrete format, $\rho_{i,n+\frac{1}{2}}=\rho_{T,i}, \quad i\in V$.
Finally, for notation simplicity, let $(\cdot)_v$ denote a vector whose elements take over all the positive direction $e_v$ in order. For example, suppose a variable $\eta_{i,j}$ is defined on the $d$-dimensional grid.
\begin{equation*}
(\eta_{i+e_v,j})_v=(\eta_{i+e_1,j},\eta_{i+e_2,j},\ldots,\eta_{i+e_d,j})^T.
\end{equation*}
Based on the discretization above, (\ref{MFG-Benemou}) can be written as:
\begin{equation}
\label{MFG-discrete}
\begin{split}
\underset{m,\rho}{\mbox{min}}\quad
&
\displaystyle{
	\sum_{i\in V}\sum_{j=1}^n
	\frac{1}{2}\frac{\left(m_{i+\frac{e_v}{2},j}\right)_v^TG_{M,i}\left(m_{i+\frac{e_v}{2},j}\right)_v}{\rho_{i,j-\frac{1}{2}}}+\sum_{j=2}^n\mathcal{F}(\{\rho_{\cdot,j-\frac{1}{2}}\})\frac{1}{\triangle x^2}
}\\
\mbox{s.t.\quad}&
\left(\rho_{i,j+\frac{1}{2}}-\rho_{i,j-\frac{1}{2}}\right)/\triangle t+\sum_{e_v}\left(m_{i+\frac{e_v}{2},j}-m_{i-\frac{e_v}{2},j}\right)/\triangle x=0\\
&\rho_{i,\frac{1}{2}}=\rho_{0,i},\quad\rho_{i,n+\frac{1}{2}}=\rho_{T,i},
\end{split}
\end{equation}
where $\{\rho_{0,\cdot}\},\{\rho_{T,\cdot}\}$ are given beforehand. We give explicit discretization of $\mathcal{F}(\rho(\cdot,t))$ for our examples. In (\ref{optimal-transport}), $\mathcal{F}(\rho(\cdot,t))$ is the integral of $F(\rho)$ and discretized as:
\begin{equation*}
\mathcal{F}(\{\rho_{\cdot,j-\frac{1}{2}}\})=
\sum_{i\in V}F(\rho_{i,j-\frac{1}{2}})\triangle x^2,\quad j=2,3,\ldots,n.
\end{equation*}
In (\ref{MMD-control}), $\mathcal{F}(\cdot)$ is a convolution function of $\rho$, whose discrete format can be written as:
\begin{equation*}
\mathcal{F}(\{\rho_{\cdot,j-\frac{1}{2}}\})=
\sum_{i\in V}\sum_{i'\in V}
\frac{1}{2}
K(i,i')
\rho_{i,j-\frac{1}{2}}
\rho_{i',j-\frac{1}{2}}\triangle x^4,\quad j=2,3,\ldots,n.
\end{equation*}


\subsection{Discrete mean-field game system}
\label{chap:MFGs-disc}
With the discretization in Subsection \ref{chap:disc-treat}, we derive a discrete format of inverse problem (\ref{MFG-conserv2}) compatible with the discrete potential MFG (\ref{MFG-discrete}).

\begin{theorem}
	\label{thm:MFG-disc}
	Suppose that (\ref{MFG-discrete}) has a strictly positive solution $\{\rho_{i,j-\frac{1}{2}}\}$.
	Then, the tuple $(\{\rho_{i,j-\frac{1}{2}}\},\{m_{i+\frac{e_v}{2},j}\})$ is a minimizer of (\ref{MFG-discrete}) if and only if the corresponding tuple $(\{\rho_{i,j-\frac{1}{2}}\},\{v_{i+\frac{e_v}{2},j}\})$ is the solution of following discrete MFG system:
	\begin{equation}
	\label{MFG-conserv-disc}
	\left\{
	\begin{split}
	&\left(\frac{\xi_{i+e_v,j-\frac{1}{2}}-\xi_{i,j-\frac{1}{2}}}{\triangle x}\right)_v+\left(\frac{(w_{i+\frac{e_v}{2},j})_v-(w_{i+\frac{e_v}{2},j-1})_v}{\triangle t}\right)=0,\, i\in V,j=2,3,\ldots,n\\
	&\left(\rho_{i,j+\frac{1}{2}}-\rho_{i,j-\frac{1}{2}}\right)/\triangle t+\sum_{e_v}\left(m_{i+\frac{e_v}{2},j}-m_{i-\frac{e_v}{2},j}\right)/\triangle x=0,\, i\in V,j=1,2,\ldots,n\\
	&\frac{w_{i+\frac{e_v}{2}+e_w,j}-w_{i+\frac{e_v}{2},j}}{\triangle x}=\frac{w_{i+\frac{e_w}{2}+e_v,j}-w_{i+\frac{e_w}{2},j}}{\triangle x},\quad e_v\not=e_w,i\in V,j=1,2,\ldots,n\\
	&
	\sum_{i_1}w_{i+\frac{e_1}{2},j}=0\quad i_2=1,2,\ldots,m,\quad \sum_{i_2}w_{i+\frac{e_2}{2},j}=0\quad i_1=1,2,\ldots,m,
	j=1,2\ldots,n
	\end{split}
	\right.
	\end{equation}
	with initial and terminal states set as:
	\begin{equation*}
	\rho_{i,\frac{1}{2}}=\rho_{0,i},\quad\rho_{i,n+\frac{1}{2}}=\rho_{T,i}.
	\end{equation*}
	In (\ref{MFG-conserv-disc}),
	\begin{equation*}
	\begin{split}
	&(w_{i+\frac{e_v}{2},j})_v=G_{M,i}(v_{i+\frac{e_v}{2},j})_v,
	\quad i\in V, j=1,2,\ldots n,\\
	&\xi_{i,j-\frac{1}{2}}=
	\frac{1}{2}(v_{i+\frac{e_v}{2},j})_v^TG_{M,i}(v_{i+\frac{e_v}{2},j})_v
	-
	\frac{\partial}{\partial \rho_{i}}\mathcal{F}(\{\rho_{\cdot,j-\frac{1}{2}}\})
	\frac{1}{\triangle x^2}.
	\end{split}
	\end{equation*}
\end{theorem}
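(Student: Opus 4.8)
The plan is to run the proof of Theorem~\ref{thm:MFG-system} on the grid, trading the $L^2$ calculus of variations for ordinary partial differentiation of a finite-dimensional Lagrangian and the Poincar\'e lemma on $\mathbb{T}^d$ for its discrete counterpart on the lattice torus. First I would pass to flux variables by writing $m_{i+e_v/2,j}=\rho_{i,j-1/2}\,v_{i+e_v/2,j}$ in \eqref{MFG-discrete}. After this substitution the kinetic term $\tfrac12(m_{i+e_v/2,j})_v^\top G_{M,i}(m_{i+e_v/2,j})_v/\rho_{i,j-1/2}$ is the perspective of the positive definite quadratic form $q\mapsto\tfrac12 q^\top G_{M,i}q$ and hence jointly convex on $\{\rho>0\}$; $\mathcal{F}$ is convex by the standing assumption inherited from \eqref{MFG-Benemou}; and the remaining constraints (the discrete continuity equation, and the prescribed layers $\rho_{\cdot,1/2}=\rho_{0,\cdot}$, $\rho_{\cdot,n+1/2}=\rho_{T,\cdot}$) are affine. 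Thus \eqref{MFG-discrete} is a convex program with affine constraints, so its KKT system is both necessary and sufficient for global optimality, and it suffices to show that this KKT system is equivalent to \eqref{MFG-conserv-disc}.

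Next I would attach a multiplier $\varphi_{i,j}$ to the discrete continuity equation at each cell $(i,j)$, form the Lagrangian, and perform discrete summation by parts in space and in time to move the finite differences off $\rho$ and $m$. Setting the derivative in $m_{i+e_v/2,j}$ to zero yields $w_{i+e_v/2,j}=(\varphi_{i+e_v,j}-\varphi_{i,j})/\triangle x$, i.e.\ the edge field $w=G_M v$ is exactly the discrete spatial gradient of $\varphi$; setting the derivative in the free densities $\rho_{i,j-1/2}$ (for $j=2,\dots,n$; the boundary layers are frozen, so no boundary terms survive) to zero yields $\xi_{i,j-1/2}=(\varphi_{i,j-1}-\varphi_{i,j})/\triangle t$, the discrete analogue of $\varphi_t=\tfrac12 v^\top G_M v-\delta\mathcal{F}/\delta\rho$; and the derivative in $\varphi_{i,j}$ returns the discrete continuity equation. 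So the KKT system says: $(\rho,m)$ obey the continuity equation and there is a grid potential $\varphi$ whose discrete spatial gradient is $w$ and whose (negated) discrete time difference is $\xi$.

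The forward implication is then immediate: a discrete gradient $w$ automatically satisfies the discrete curl-free identity and the loop-sum conditions $\sum_{i_1}w_{i+e_1/2,j}=\sum_{i_2}w_{i+e_2/2,j}=0$ (telescoping around the two fundamental cycles of the discrete torus), and the discrete Hamilton--Jacobi equation in \eqref{MFG-conserv-disc} follows by cancelling the mixed second differences of $\varphi$ — the discrete version of ``$\varphi_{x_i,t}=\varphi_{t,x_i}$''. For the converse, given a solution of \eqref{MFG-conserv-disc} I would reconstruct $\varphi$: on each time layer $j$ the discrete curl-free identity together with the two loop-sum conditions lets one define $\varphi_{\cdot,j}$, up to one additive constant $c_j$, by summing $w$ along lattice paths (path-independence on contractible loops comes from curl-freeness, on the two non-contractible generators from the loop-sum conditions). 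The discrete Hamilton--Jacobi equation is precisely the compatibility condition making $(\varphi_{i,j}-\varphi_{i,j-1})+\triangle t\,\xi_{i,j-1/2}$ independent of $i$, so the $c_j$ can be chosen layer by layer to enforce $\xi=-\partial_t^{\triangle}\varphi$ identically. This produces a KKT triple $(\rho,m,\varphi)$, and convexity upgrades it to a global minimizer, closing the equivalence.

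The main obstacle is this discrete Poincar\'e/Hodge statement on the non-simply-connected lattice torus: converting the two loop-sum constraints plus the discrete curl-free identity into an honest grid potential, and then tying the layerwise potentials together via the discrete Hamilton--Jacobi equation, all while bookkeeping the off-by-one indexing at the initial and terminal time layers (where $\rho_{\cdot,1/2}$ and $\rho_{\cdot,n+1/2}$ are data and are not differentiated, which is why the discrete Hamilton--Jacobi equation is only asserted for $j=2,\dots,n$). Once the discrete potential is available the rest is routine discrete summation by parts, exactly paralleling Subsection~\ref{chap:proof-MFG-sys}.
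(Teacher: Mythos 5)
Your proposal follows essentially the same route as the paper's proof: form the discrete Lagrangian with a multiplier $\varphi_{i,j}$ for the continuity constraint, sum by parts, read off the discrete KKT system so that $w$ becomes the discrete spatial gradient of $\varphi$ and $\xi$ its negated discrete time difference, and then replace the existence of $\varphi$ by the curl-free identity, the two loop-sum conditions on the lattice torus, and the discrete Hamilton--Jacobi equation as compatibility conditions. Your write-up is in fact more explicit than the paper's on the converse direction (the layer-by-layer reconstruction of $\varphi$ and the choice of the constants $c_j$), but the underlying argument is identical.
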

\begin{proof}[Proof of Theorem \ref{thm:MFG-disc}]
	\rm We start with the discrete potential MFG (\ref{MFG-discrete}), whose Lagrangian can be formulated as:
	\begin{align}
	\label{Lagrangian-discrete1}
	\begin{split}
	&\operatorname*{min}_{m,\rho}\operatorname*{max}_{\varphi}
	\sum_{i\in V}\sum_{j=1}^n
	\left\{\frac{1}{2}\frac{(m_{i+\frac{e_v}{2},j})_v^TG_{M,i}(m_{i+\frac{e_v}{2},j})_v}{\rho_{i,j-\frac{1}{2}}}+
	\varphi_{i,j}\left(
	\left(\rho_{i,j+\frac{1}{2}}-\rho_{i,j-\frac{1}{2}}\right)/\triangle t\right.\right.\\
	&\qquad\left.\left.+\sum_{e_v}\left(m_{i+\frac{e_v}{2},j}-m_{i-\frac{e_v}{2},j}\right)/\triangle x
	\right)\right\}+
	\sum_{j=2}^n\mathcal{F}(\{\rho_{\cdot,j-\frac{1}{2}}\})\frac{1}{\triangle x^2}
	\end{split}\\
	\label{Lagrangian-discrete2}
	\begin{split}
	=&\operatorname*{min}_{m,\rho}\operatorname*{max}_{\varphi}
	\sum_{i\in V}\sum_{j=1}^n
	\frac{1}{2}\frac{(m_{i+\frac{e_v}{2},j})_v^TG_{M,i}(m_{i+\frac{e_v}{2},j})_v}{\rho_{i,j-\frac{1}{2}}}+
	\sum_{i\in V}\sum_{j=2}^n\frac{\varphi_{i,j-1}-\varphi_{i,j}}{\triangle t}\rho_{i,j-\frac{1}{2}}
	+
	\sum_{i\in V}\frac{\varphi_{i,n}}{\triangle t}\rho_{i,n+\frac{1}{2}}\\
	&\qquad
	-
	\sum_{i\in V}
	\frac{\varphi_{i,1}}{\triangle t}
	\rho_{i,\frac{1}{2}}
	+
	\sum_{i\in V}\sum_{j=1}^n
	\left(
	\frac{\varphi_{i,j}-\varphi_{i+e_v,j}}{\triangle x}
	\right)_v^T(m_{i+\frac{e_v}{2},j})_v
	+
	\sum_{j=2}^n\mathcal{F}(\{\rho_{\cdot,j-\frac{1}{2}}\})\frac{1}{\triangle x^2}.
	\end{split}
	\end{align}
	In (\ref{Lagrangian-discrete1}) and (\ref{Lagrangian-discrete2}), $\{\rho_{i,\frac{1}{2}}\}$ and $\{\rho_{i,n+\frac{1}{2}}\}$ are fixed.
	
	Taking derivative to the unknowns $\rho_{i,j-\frac{1}{2}},\,m_{i+\frac{e_v}{2},j}$ and $\varphi_{i,j}$, we obtain the discrete KKT condition:
	\begin{equation}
	\label{MFG-KKT-disc}
	\left\{
	\begin{split}
	&\frac{G_{M,i}(m_{i+\frac{e_v}{2},j})_v}{\rho_{i,j-\frac{1}{2}}}+\left(
	\frac{\varphi_{i,j}-\varphi_{i+e_v,j}}{\triangle x}
	\right)_v=0,
	\quad i\in V, j=1,2,\ldots n\\
	&-\frac{1}{2}\frac{(m_{i+\frac{e_v}{2},j})_v^TG_{M,i}(m_{i+\frac{e_v}{2},j})_v}{{\rho_{i,j-\frac{1}{2}}}^2}
	+
	\frac{\varphi_{i,j-1}-\varphi_{i,j}}{\triangle t}
	+
	\frac{\partial}{\partial \rho_{i,j-\frac{1}{2}}}\mathcal{F}(\{\rho_{\cdot,j-\frac{1}{2}}\})\frac{1}{\triangle x^2}=0,\\
	&\qquad i\in V,j=2,3,\ldots,n\\
	&\left(\rho_{i,j+\frac{1}{2}}-\rho_{i,j-\frac{1}{2}}\right)/\triangle t+\sum_{e_v}\left(m_{i+\frac{e_v}{2},j}-m_{i-\frac{e_v}{2},j}\right)/\triangle x=0,\, i\in V,j=1,2,\ldots,n
	\end{split}
	\right.
	\end{equation}
	whose positive solution is the minimizer of (\ref{MFG-discrete}). Solving the first equation in (\ref{MFG-KKT-disc}), we are able to get: 
	\begin{equation*}
	(m_{i+\frac{e_v}{2},j})_v=\rho_{i,j-\frac{1}{2}}G_{M,i}^{-1}\left(
	\frac{\varphi_{i+e_v,j}-\varphi_{i,j}}{\triangle x}
	\right)_v,
	\quad i\in V, j=1,2,\ldots n.
	\end{equation*}
	Let
	\begin{equation*}
	(w_{i+\frac{e_v}{2},j})_v=\left(
	\frac{\varphi_{i+e_v,j}-\varphi_{i,j}}{\triangle x}
	\right)_v=G_{M,i}(v_{i+\frac{e_v}{2},j})_v,
	\quad i\in V, j=1,2,\ldots n.
	\end{equation*}
	Substituting the solved $(m_{i+\frac{e_v}{2},j})_v$ into the second and third equation of (\ref{MFG-KKT-disc}), we have:
	\begin{equation}
	\label{MFG-system-disc}
	\left\{
	\begin{split}
	&-\frac{1}{2}(w_{i+\frac{e_v}{2},j})_v^TG^{-1}_{M,i}(w_{i+\frac{e_v}{2},j})_v
	+
	\frac{\varphi_{i,j-1}-\varphi_{i,j}}{\triangle t}
	+
	\frac{\partial}{\partial \rho_{i}}\mathcal{F}(\{\rho_{\cdot,j-\frac{1}{2}}\})\frac{1}{\triangle x^2}=0,\\
	&\qquad i\in V,j=2,3,\ldots,n\\
	&\left(\rho_{i,j+\frac{1}{2}}-\rho_{i,j-\frac{1}{2}}\right)/\triangle t+\sum_{e_v}\left(m_{i+\frac{e_v}{2},j}-m_{i-\frac{e_v}{2},j}\right)/\triangle x=0,\, i\in V,j=1,2,\ldots,n\\
	&(m_{i+\frac{e_v}{2},j})_v=\rho_{i,j-\frac{1}{2}}G_{M,i}^{-1}(w_{i+\frac{e_v}{2},j})_v,
	\quad i\in V, j=1,2,\ldots n
	\end{split}
	\right.
	\end{equation}
	Take
	\begin{equation*}
	\xi_{i,j-\frac{1}{2}}=
	\frac{1}{2}(w_{i+\frac{e_v}{2},j})_v^TG^{-1}_{M,i}(w_{i+\frac{e_v}{2},j})_v
	-
	\frac{\partial}{\partial \rho_{i}}\mathcal{F}(\{\rho_{\cdot,j-\frac{1}{2}}\})
	\frac{1}{\triangle x^2}.
	\end{equation*}
	Then the equivalent equations	 derived from mixed second-order discrete derivative of $\varphi$ can be written as:
	\begin{equation}
	\label{MFG-discrete-p1}
	\left\{
	\begin{split}
	&\left(\frac{\xi_{i+e_v,j-\frac{1}{2}}-\xi_{i,j-\frac{1}{2}}}{\triangle x}\right)_v+\left(\frac{(w_{i+\frac{e_v}{2},j})_v-(w_{i+\frac{e_v}{2},j-1})_v}{\triangle t}\right)=0,\, i\in V,j=2,3,\ldots,n\\
	&\frac{w_{i+\frac{e_v}{2}+e_w,j}-w_{i+\frac{e_v}{2},j}}{\triangle x}=\frac{w_{i+\frac{e_w}{2}+e_v,j}-w_{i+\frac{e_w}{2},j}}{\triangle x},\quad e_v\not=e_w,i\in V,j=1,2,\ldots,n
	\end{split}
	\right.
	\end{equation}
	The second equation in \eqref{MFG-KKT-disc} can be substituted with (\ref{MFG-discrete-p1}). Besides, due to $\mathbb{T}^d$'s non-simply connection, for $i=(i_1,i_2)$, we add another constraint for compatibility:
	\begin{equation}
	\label{MFG-discrete-p2}
	\begin{split}
	&\sum_{i_1}w_{i+\frac{e_1}{2},j}=0,\quad i_2=1,2,\ldots,m, j=1,2,\ldots,n\\ &\sum_{i_2}w_{i+\frac{e_2}{2},j}=0,\quad i_1=1,2,\ldots,m, j=1,2,\ldots,n.
	\end{split}
	\end{equation}
	Combining (\ref{MFG-discrete-p1}), (\ref{MFG-discrete-p2}), and the first equation in (\ref{MFG-system-disc}), we get (\ref{MFG-conserv-disc}). Thus the minimizer of (\ref{MFG-discrete}) equals to the solution to (\ref{MFG-conserv-disc}). The theorem holds.
\end{proof}

\begin{remark}
	\rm In (\ref{MFG-conserv-disc}), $\{\xi_{i,j-\frac{1}{2}}\},\{m_{i+\frac{e_v}{2},j}\},\{w_{i+\frac{e_v}{2},j}\}$ can be totally represented by discrete $\rho,\vw$. Here (\ref{MFG-conserv-disc}) is a system purely consisting of discrete $\rho,\vv$ and transport parameters, $G_M$,$K$, etc. Taking (\ref{MFG-conserv-disc}) as constraints, we are able to design a discrete inverse problem to learn target parameters with feasible observations of $\rho,\vv$.
\end{remark}

\subsection{Inverse problem in discrete format}

With the discretization in Subsection \ref{chap:disc-treat} and discrete MFG system in Subsection \ref{chap:MFGs-disc}, we are able to design a discrete format inverse model to learn the metric kernel.
\begin{model}[Discrete Inverse Model for Ground Metric]
	\label{disc-inv-ground-metric}
	\rm For the regularized optimization problem (\ref{optimal-transport}), assume that the running cost functional $\mathcal{F}$ is known beforehand, and suppose the observation of density distribution $\{\hat{\rho}_{i,j-\frac{1}{2}}\},\,i\in V,j=1,2,\ldots,n+1$ and velocity field $\{(\hat{v}_{i+\frac{e_v}{2},j})_v\},\,i\in V,j=1,2,\ldots,n$ is available.
	In the observation, $\{\hat{\rho}_{\cdot,\frac{1}{2}}\}$ and $\{\hat{\rho}_{\cdot,n+\frac{1}{2}}\}$ denote the initial/terminal discrete density distribution.
	Then we have an inverse model on the grid as follows:
	\begin{equation}
	\label{MFG-inv-disc1}
	\begin{split}
	&\operatorname*{min}_{g_0,\rho,v}
	\sum_{i\in V}\sum_{j=1}^{n+1}\frac{\alpha}{2}\left(\rho_{i,j-\frac{1}{2}}-\hat{\rho}_{i,j-\frac{1}{2}}\right)^2+
	\sum_{i\in V}\sum_{j=1}^n\frac{\beta}{2}\|(v_{i+\frac{e_v}{2},j})_v-(\hat{v}_{i+\frac{e_v}{2},j})_v\|^2
	\\
	&\quad+\sum_{i\in V}
	\frac{\alpha_0}{2\triangle t}
	\left(
	\left(\rho_{i,\frac{1}{2}}-\hat{\rho}_{i,\frac{1}{2}}\right)^2
	+
	\left(\rho_{i,n+\frac{1}{2}}-\hat{\rho}_{i,n+\frac{1}{2}}\right)^2
	\right)
	+\sum_{i\in V}\sum_{e_v}\frac{\gamma}{p\triangle t}
	\left|\frac{g_{0,i+e_v}-g_{0,i}}{\triangle x}\right|^p
	\\
	&s.t.
	\left\{
	\begin{split}
	&\left(\frac{\xi_{i+e_v,j-\frac{1}{2}}-\xi_{i,j-\frac{1}{2}}}{\triangle x}\right)_v+\left(\frac{G_{M,i}(v_{i+\frac{e_v}{2},j})_v-G_{M,i}(v_{i+\frac{e_v}{2},j-1})_v}{\triangle t}\right)=0,\\
	&\qquad 
	i\in V,j=2,3,\ldots,n\\
	&\left(\rho_{i,j+\frac{1}{2}}-\rho_{i,j-\frac{1}{2}}\right)/\triangle t+\sum_{e_v}\left(m_{i+\frac{e_v}{2},j}-m_{i-\frac{e_v}{2},j}\right)/\triangle x=0,\, i\in V,j=1,2,\ldots,n\\
	&\frac{w_{i+\frac{e_v}{2}+e_w,j}-w_{i+\frac{e_v}{2},j}}{\triangle x}=\frac{w_{i+\frac{e_w}{2}+e_v,j}-w_{i+\frac{e_w}{2},j}}{\triangle x},\, e_v\not=e_w,i\in V,j=1,2,\ldots,n\\
	&
	\sum_{i_1}w_{i+\frac{e_1}{2},j}=0,\quad i_2=1,2,\ldots,m,\quad \sum_{i_2}w_{i+\frac{e_2}{2},j}=0,\quad i_1=1,2,\ldots,m,\,
	j=1,2\ldots,n
	\end{split}
	\right.
	\end{split}
	\end{equation}
	where
	\begin{equation*}
	\left\{
	\begin{split}
	&\xi_{i,j-\frac{1}{2}}=
	\frac{1}{2}(v_{i+\frac{e_v}{2},j})_v^TG_{M,i}(v_{i+\frac{e_v}{2},j})_v
	-
	F'(\rho_{i,j-\frac{1}{2}}),\quad i\in V,j=2,3,\ldots,n\\
	&
	(w_{i+\frac{e_v}{2},j})_v=G_{M,i}(v_{i+\frac{e_v}{2},j})_v,\quad i\in V,j=1,2,\ldots,n\\
	&
	(m_{i+\frac{e_v}{2},j})_v=\rho_{i,j-\frac{1}{2}}(v_{i+\frac{e_v}{2},j})_v,\quad i\in V,j=1,2,\ldots,n
	\end{split}
	\right.
	\end{equation*}
	Note that in (\ref{MFG-inv-disc1}), discrete $\xi,\vw,\vm$ are auxiliary variables for the simplicity of statement, and they can be totally represented by discrete $\rho,\vv,g_0$, the unknowns in the inverse optimization problem. The selection of $p$ is also flexible as in continuous model (\ref{MFG-inv2}). The model is adaptive to the prior properties of $g_0$.
\end{model}

Similar to Model \ref{disc-inv-ground-metric}, following the idea in Model \ref{inv-MMD} and applying the discrete system (\ref{MFG-conserv-disc}), we develop a discrete inverse problem to learn the convolution kernel:
\begin{model}[Discrete Inverse Model for Convolution Kernel]
	\label{disc-inv-kernel}
	\rm In the interaction energy regularized MFG (\ref{MMD-control}), we assume the ground metric $G_M$ is known, and the observations $\{\hat{\rho}_{i,j-\frac{1}{2}}\},\,i\in V,j=1,2,\ldots,n+1$ for  density distribution and  $\{(\hat{v}_{i+\frac{e_v}{2},j})_v\},\,i\in V,j=1,2,\ldots,n$ for velocity field during the game are given. Here $\{\hat{\rho}_{\cdot,\frac{1}{2}}\}, \{\hat{\rho}_{\cdot,n+\frac{1}{2}}\}$, standing for the initial/terminal states observation, are also known. With the information above, we can construct an inverse model to learn the convolution kernel $\tilde{K}$ as follows:
	\begin{equation}
	\label{MFG-inv-disc2}
	\begin{split}
	&\operatorname*{min}_{\tilde{K},\rho,v}
	\sum_{i\in V}\sum_{j=1}^{n+1}\frac{\alpha}{2}\left(\rho_{i,j-\frac{1}{2}}-\hat{\rho}_{i,j-\frac{1}{2}}\right)^2
	+
	\sum_{i\in V}\sum_{j=1}^n\frac{\beta}{2}\|(v_{i+\frac{e_v}{2},j})_v-(\hat{v}_{i+\frac{e_v}{2},j})_v\|^2
	\\
	&\qquad
	+
	\sum_{i\in V}
	\frac{\alpha_0}{2\triangle t}
	\left(
	\left(\rho_{i,\frac{1}{2}}-\hat{\rho}_{i,\frac{1}{2}}\right)^2
	+
	\left(\rho_{i,n+\frac{1}{2}}-\hat{\rho}_{i,n+\frac{1}{2}}\right)^2
	\right)
	+
	\sum_{i\in \tilde{V}}\sum_{\{e_v|i+e_v\in\tilde{V}\}}\frac{\gamma}{p\triangle t}
	\left|\frac{\tilde{K}_{i+e_v}-\tilde{K}_{i}}{\triangle x}\right|^p
	\\
	&s.t.
	\left\{
	\begin{split}
	&\left(\frac{\xi_{i+e_v,j-\frac{1}{2}}-\xi_{i,j-\frac{1}{2}}}{\triangle x}\right)_v+\left(\frac{G_{M,i}(v_{i+\frac{e_v}{2},j})_v-G_{M,i}(v_{i+\frac{e_v}{2},j-1})_v}{\triangle t}\right)=0,\\
	&\qquad i\in V,j=2,3,\ldots,n\\
	&\left(\rho_{i,j+\frac{1}{2}}-\rho_{i,j-\frac{1}{2}}\right)/\triangle t+\sum_{e_v}\left(m_{i+\frac{e_v}{2},j}-m_{i-\frac{e_v}{2},j}\right)/\triangle x=0,\quad i\in V,j=1,2,\ldots,n\\
	&\frac{w_{i+\frac{e_v}{2}+e_w,j}-w_{i+\frac{e_v}{2},j}}{\triangle x}=\frac{w_{i+\frac{e_w}{2}+e_v,j}-w_{i+\frac{e_w}{2},j}}{\triangle x},\, e_v\not=e_w,i\in V,j=1,2,\ldots,n\\
	&
	\sum_{i_1}w_{i+\frac{e_1}{2},j}=0,\quad i_2=1,2,\ldots,m,\quad \sum_{i_2}w_{i+\frac{e_2}{2},j}=0,\quad i_1=1,2,\ldots,m, j=1,2\ldots,n
	\end{split}
	\right.
	\end{split}
	\end{equation}
	where
	\begin{equation*}
	\left\{
	\begin{split}
	&\xi_{i,j-\frac{1}{2}}=
	\frac{1}{2}(v_{i+\frac{e_v}{2},j})_v^TG_{M,i}(v_{i+\frac{e_v}{2},j})_v
	-
	\sum_{i'\in V}K(i,i')\rho_{i',j-\frac{1}{2}}\triangle x^2
	,\quad i\in V,j=2,3,\ldots,n\\
	&
	(w_{i+\frac{e_v}{2},j})_v=G_{M,i}(v_{i+\frac{e_v}{2},j})_v,\quad i\in V,j=1,2,\ldots,n\\
	&
	(m_{i+\frac{e_v}{2},j})_v=\rho_{i,j-\frac{1}{2}}(v_{i+\frac{e_v}{2},j})_v,\quad i\in V,j=1,2,\ldots,n
	\end{split}
	\right.
	\end{equation*}
	The relationship between discrete $K$ and $\tilde{K}$ is shown in (\ref{disc-K-tildeK}). Here (\ref{MFG-inv-disc2}) is a model learning the convolution kernel in MFG with interaction energy with fully observation of $\rho,\vv$. We target on solving unknown discrete variables $\rho,\vv,\tilde{K}$ in the optimization.
\end{model}
\begin{remark}
	\rm The four constraints of (\ref{MFG-inv-disc1}) and (\ref{MFG-inv-disc2}) are in  discrete differential formats. The first and the third constraints are bi-linear in $\rho,\vv,G_M$ or $\tilde{K}$, while the second one is close to being bi-linear except a quadratic term in $\xi$.
	Furthermore, we notice that, once the Lagrangian is deduced, the difference operator can be transferred to the dual variable via integration by part.
\end{remark}

\subsection{Algorithm}
\label{chap:disc-alg}
In this section, we present the algorithm for solving the optimization problems in Model \ref{disc-inv-ground-metric} and Model \ref{disc-inv-kernel}.

We apply a primal-dual algorithm to solve the discrete optimization problem. To better illustrate our algorithm, we reformulate the optimization problem as:
\begin{gather*}
\operatorname*{minimize}_{\rho,\vv,\theta}f(\rho,\vv,\theta)\\
s.t.
\quad
c_i(\rho,\vv,\theta)=0,\quad i=1,2,\ldots,r,
\end{gather*}
where $f(\rho,\vv,\theta)$ is the objective function, $\theta$ is the parameter to be recreated, which is $g_0$ in Model \ref{disc-inv-ground-metric} and $\tilde{K}$ in Model \ref{disc-inv-kernel}. Here $c_i$ is constraint function, and $r$ indicates the number of constraints. In both of the models, we have 4 constraints.

Write the Lagrangian:
\begin{equation*}
L(\rho,\vv,\theta;\{\psi_i\}_{i=1}^r)=f(\rho,\vv,\theta)+\sum_{i=1}^{r}c_{i}(\rho,\vv,\theta)\psi_i,
\end{equation*}
where $\psi_i$ stands for the dual variables or Lagrangian multipliers. In the primal step, we fix the dual variable and update the primal one with gradient descent:
\begin{equation}
\label{primal_update}
\left\{
\begin{split}
&\rho^{k+1}=\rho^{k}-\tau_\rho \frac{\delta}{\delta \rho}L(\rho^k,\vv^k,\theta^k;\{\psi_i^k\}_{i=1}^r)\\
&\vv^{k+1}=\vv^{k}-\tau_\vv \frac{\delta}{\delta \vv}L(\rho^k,\vv^k,\theta^k;\{\psi_i^k\}_{i=1}^r)\\
&\theta^{k+1}=\theta^{k}-\tau_\theta \frac{\delta}{\delta \theta}L(\rho^k,\vv^k,\theta^k;\{\psi_i^k\}_{i=1}^r)\\
\end{split}
\right.
\end{equation}
The parameter $\tau$ is the descend step size. In practice, we sometimes take step length $\tau_\rho,\tau_\vv,\tau_\theta$ as different values, thus getting a better convergence rate. Next, we update the dual variable while fixing the newly updated primal variables:
\begin{equation}
\label{dual_update}
\psi_i^{k+1}=\psi_i^{k}+\sigma c_i(\rho^*,\vv^*,\theta^*),\quad i=1,2,\ldots,r,
\end{equation}
where $\rho^*,\vv^*,\theta^*$ are defined as:
\begin{equation}
\label{dual_star}
\rho^*=2\rho^{k+1}-\rho^k,\quad \vv^*=2\vv^{k+1}-\vv^k,\quad \theta^*=2\theta^{k+1}-\theta^k.
\end{equation}
Also $\sigma$ is the step size. This follows the format in primal-dual hybrid (PDHG) algorithm~\cite{chambolle_ergodic_2016}. The pseudo-code is shown in Algorithm \ref{alg:primal-dual}. The detailed iteration steps for Model \ref{disc-inv-ground-metric} are given in Appendix \ref{apped:alg}.

\begin{algorithm}
	\caption{Primal-Dual Algorithm}
	\label{alg:primal-dual}
	\begin{algorithmic}
		\STATE{\textbf{Input:} the observations $\hat{\rho},\hat{\vv}$, all the scaling parameters $\alpha,\alpha_0,\beta,\gamma$, the norm index $p$, and other prior known parameters in the inverse model. Input the iteration parameters $\tau_\rho,\tau_\vv,\tau_\theta,\sigma$.}
		\STATE
		{\textbf{Initialization:} Set $\rho>0$, $\vv=0$, $\theta>0$.}
		\FOR{$k=1,2,\ldots$ \quad(until convergence)}
		\STATE{Primal step:}
		\STATE{Update $\rho^{k+1},\vv^{k+1},\theta^{k+1}$ as in (\ref{primal_update}).}
		\STATE{Dual step:}
		\STATE{Update $\rho^*,\vv^*,\theta^*$ in the dual iteration as in (\ref{dual_star}).}
		\STATE{Update dual variables $\psi_i^{k+1}$ as in (\ref{dual_update}).}
		\ENDFOR
	\end{algorithmic}
\end{algorithm}

\begin{remark}
	\rm
	The spatial discretization and primal-dual iteration make the algorithm parallelizable. Both primal and dual updates reduce to parallel subproblems, which enables high computational efficiency.
\end{remark}

Typically, $\alpha,\beta$ can always be taken as $1/\|\hat{\rho}\|^2,1/\|\hat{\vv}\|^2$. However, there is little known for the selection of $\gamma$ in (\ref{intro-model}). To avoid the choice of $\gamma$, we can apply Bregman iteration~\cite{OsherBurgerGoldfarbXuYin2005_iterative,yin_bregman_2008}. We can start with a sub-optimal choice of $\gamma$ and get a better recreation of the target parameter $\theta$ after some iterations. We consider problem (\ref{intro-model}). The idea of Bregman iteration is as follows. Let
$$J(\theta)=\frac{\gamma}{p}\|\nabla\theta\|_p^p,$$
and the Bregman divergence between $\theta$ and $\tilde{\theta}$ is defined as
$$D_J^\vq(\theta,\tilde{\theta})=J(\theta)-J(\tilde{\theta})-\langle \vq,\theta-\tilde{\theta}\rangle,$$
with some $\vq\in\partial J(\tilde{\theta})$ being a subgradient for $J$ at $\tilde{\theta}$. Set $\vq^0=0$. In each Bregman iteration, we solve the following optimization problem:
\begin{equation}
\label{Breg_iter}
\begin{split}
\underset{\theta,\rho,\vv}{\mbox{min}}\quad
&
\displaystyle{
	D_J^{\vq^l}(\theta,\theta^l)+
	\frac{\alpha}{2}\|\rho-\hat{\rho}\|^2+\frac{\beta}{2}\|\vv-\hat{\vv}\|^2
	+\frac{\alpha_0}{2}
	\left(
	\|\rho_0-\hat{\rho}_0\|^2+\|\rho_T-\hat{\rho}_T\|^2
	\right)
}\\
\mbox{s.t.\quad}
&c_i(\rho,\vv,\theta)=0.\quad i=1,2,\ldots,r.
\end{split}
\end{equation}	
By applying the primal-dual algorithm to (\ref{Breg_iter}) in $l^\text{th}$ step (similar to Algorithm \ref{alg:primal-dual}), we obtain an approximate minimizer $(\theta^{l+1},\rho^{l+1},\vv^{l+1})$ and the corresponding dual variable $\{\psi_i^{l+1}\}_{i=1}^r$. Then we update $\vq^{l}$ as a subgradient of $J(\cdot)$ at $\theta^{l+1}$. According to KKT condition,
$$0\in\partial J(\theta^{l+1})-\vq^l+\sum_{i=1}^r \frac{\partial}{\partial\theta}c_i(\rho^{l+1},\vv^{l+1},\theta^{l+1})\psi_i^{l+1}.$$
So we take $\vq^{l+1}=\vq^l-\sum_{i=1}^{r}\frac{\delta}{\delta\theta}
c_i(\rho^{l+1},\vv^{l+1},\theta^{l+1})\psi_i^{l+1}$ to have $\vq^{l+1}\in\partial_{\theta}J(\theta^{l+1})$. We run several Bregman iterations until the numerical result converges.
\begin{algorithm}
	\caption{Bregman iteration}
	\label{alg:Bregman}
	\begin{algorithmic}
		\STATE{\textbf{Input:} the observations $\hat{\rho},\hat{\vv}$, all the scaling parameters $\alpha,\alpha_0,\beta$, the norm index $p$, and other prior known parameters in the inverse model. Input the iteration parameters $\tau_\rho,\tau_\vv,\tau_\theta,\sigma,\delta$.}
		\STATE
		{\textbf{Initialization:} Set $\rho$ as some suitable constant $>0$, $\vv=0$, $\theta$ to be a positive constant. Set $\vq^0=0$.}
		\FOR{l=0,1,2,\ldots (Until converges)}
		\STATE{Solve (\ref{Breg_iter}) with primal-dual algorithm, get $(\theta^{l+1},\rho^{l+1},\vv^{l+1})$, and the dual variable $\{\psi_i^{l+1}\}_{i=1}^r$ }
		\STATE{Update $\vq^{l+1}=\vq^l-\sum_{i=1}^{r}\frac{\partial}{\partial\theta}
			c_i(\rho^{l+1},\vv^{l+1},\theta^{l+1})\psi_i^{l+1}$.}
		\ENDFOR
	\end{algorithmic}
\end{algorithm}

\section{Computational Results}
\label{chap:comp_result}
We have tested both Model \ref{inv-ground-metric} and Model \ref{inv-MMD} with Algorithm \ref{alg:primal-dual}. We used the numerical result of the forward problem (\ref{MFG-discrete}) as the observations in our inverse-problem tests. This was to recreate the corresponding parameters, the metric kernel $g_0$ and the convolution kernel $\tilde{K}$ from the full observations of $\hat{\rho},\hat{\vv}$.

The scaling parameters in the model, in several tests, were taken as
\begin{equation}
\label{standard-para}
\alpha=\alpha_0=\frac{1}{\|\hat{\rho}\|^2},\quad\beta=\frac{1}{\hat{\vv}^T\hat{\vv}}.
\end{equation}
In other words, the unknown variables density measure and the velocity field in the models were scaled to the same size according to their observations. It was hard to pre-select  $\gamma$. Thus in the test, we took $\gamma$ in a wide range. The parameter $p$ was determined according to the properties of the ground metric and the convolution kernel. If the regularized variable was smooth, $p=2$ was used. When the target variable was sparse, $p=1$ was chosen.

In the computation test, we made the assumption that some minor information about the parameters $g_0$, $\tilde{K}$ was known beforehand to reduce the uncertainty during the iterations. The details are covered below with each example.

In our test for Model \ref{disc-inv-ground-metric} to learn the metric kernel $g_0$, we took $F(\rho)=\rho^2/2$ in all of our tests. In the test for Model \ref{disc-inv-kernel}, the real kernel $\tilde{K}$ was always taken as the standard form
$\tilde{K}(x)=\exp(-\vx^TA\vx/\epsilon) $
as in Example \ref{e.x.:MMD}.

The observations $\hat{\rho},\hat{\vv}$ were noised to different extents by i.i.d. additive noise on each pixel. The noise subjects to a uniform distribution and relies on the norm of $\hat{\rho},\hat{\vv}$:
\begin{equation}
\label{noise}
\begin{split}
&\epsilon_\rho\sim\epsilon^*\|\hat{\rho}\|\cdot U[-0.5,0.5]\quad i.i.d.\\ &\epsilon_{\vv}\sim\epsilon^*\|\hat{\vv}\|\cdot U[-0.5,0.5]^d\quad i.i.d.,
\end{split}
\end{equation}
where $\epsilon^*>0$ is the noise factor, $U[-0.5,0.5]$ is the uniform distribution between $[-0.5,0.5]$, and $U[-0.5,0.5]^d$ is the uniform distribution in $[-0.5,0.5]^d$. Our codes were written in MATLAB.
\begin{test}
	\label{test:2d-metric}
	\rm We tested Algorithm \ref{alg:primal-dual} in 2 dimensions. No noise was impacted on the observed data. We took $G_M$ as the linear form of $g_0$:
	\begin{equation*}
	G(x,y)=\left(
	\begin{array}{cc}
	g_0+4 & g_0+2\\
	g_0+2 & 2g_0+1
	\end{array}
	\right).
	\end{equation*}
	We discretized the problem on the $50\times50 \times30$ grid.
	The square space was uniformly scattered into $50\times50$ boxes while the time $[0,T]$ was scattered into 30 elements. Beforehand, we had information about $g_0$ in a single pile of pixels. The values of $g_0$ at boxes $\{C(1,\cdot)\}$ were obtained. For the scaling parameter, we took $\alpha=10^3/\|\hat{\rho}\|^2,\beta=1/\|\hat{\vv}\|^2,\alpha_0=0,\gamma=10^{-2}$. Supposed that the real metric kernel was smooth, we took $p=2$. The iteration step sizes for the primal variables were $\tau_\rho=\tau_\vv=10^{-6},\tau_g=10^{-4}$, and the step size for dual variables was $\sigma=10^{-6}$. We let it run for $2\times 10^5$ iterations and observed the variables have converged. The result is displayed in Figure \ref{result-patern-2-11}.
	\begin{figure}[htbp]
		
		\centering
		\includegraphics[width=0.9\textwidth]{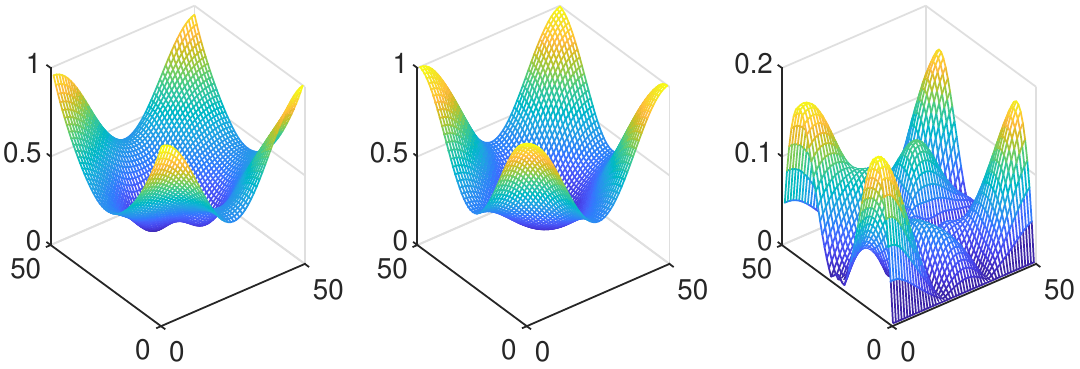}
		\caption{Ground metric recreated in 2-dimensional basis under $H^2$ regularization. From left to right: $g_0$ learned from the observations, real $\bar{g_0}$, the absolute difference between the learned $g_0$ and the real data $|g_0-\bar{g_0}|$.}
		\label{result-patern-2-11}
	\end{figure}
\end{test}

\begin{test}
	\rm
	We tested the 2-dimensional based inverse model for the convolution kernel.
	
	In this example, we tested Algorithm \ref{alg:primal-dual} on 2-dimensional based Model \ref{disc-inv-kernel}. No noise was impacted on the observations. The real kernel for the forward problem was taken as:
	\begin{equation*}
	K(\vx)=\exp\left(-\vx^T
	\left(
	\begin{array}{cc}
	3&1\\
	1&3
	\end{array}
	\right)\vx/0.5\right).
	\end{equation*}
	
	We discretized the problem on the $24\times24 \times30$ grid. We had information about $\tilde{K}$ in a single pile of pixels. For the scaling parameter, we took $\alpha=100/\|\hat{\rho}\|^2,\beta=1/\|\hat{\vv}\|^2,\alpha_0=0,\gamma=10^{-3}$. Assumed that we knew the kernel being smooth as a priori, we took $p=2$. The iteration step sizes for the primal variables were $\tau_\rho=\tau_\vv=10^{-5},\tau_g=10^{-3}$, and the step size for dual variables was $\sigma=10^{-5}$. We took $1.5\times10^5$ iterations. In observation, the variables had converged after these iterations. The result is shown in Figure \ref{kernel-2d-3-26}.
	
	\begin{figure}[htbp]
		
		\centering
		\includegraphics[width=0.9\textwidth]{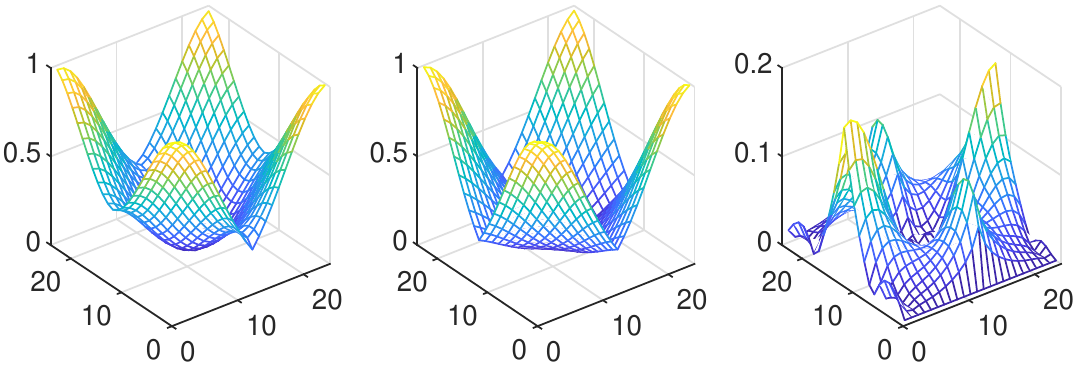}
		\caption{Convolution kernel recreated in 2-dimensional basis under $H^2$ regularization. To have an overview of the kernel on $\mathbb{T}^2$. We recreated the $K(x,0)$ on $[0,1]^2$ with the learned $\tilde{K}$ on $[0,1/2]^2$. From left to right: kernel $K(x,0)$ learned from the observations, real kernel $K_{real}(x,0)$, the absolute difference between the learned parameter and the real one $|K(x,0)-K_{real}(x,0)|$.}
		\label{kernel-2d-3-26}
	\end{figure}
\end{test}

\begin{test}
	\label{test:noise-metric}
	\rm Then we investigated the case where noise was impacted on our observation. We tested examples in 1-dimensional basis. Here $G_M=g_0$ was taken. We discretized the problem on a $50\times 30$ grid in $x,t$, with space uniformly spaced in each variable.
	The noise $\epsilon_\rho,\epsilon_{\vv}$ were taken as in (\ref{noise}). Noise factor $\epsilon^*$ adopted varied in $\{0.1,0.4,1\}$. We assume that we have the noised data. We show noised $\hat{\rho},\hat{\vv}$ in Figure \ref{noise-rho-v}.
	Assumed that we had information about $G_M$ at a single pixel, and the value of $G_M$ at that pixel was fixed in the iteration.
	The scaling parameter $\alpha=1/\|\hat{\rho}\|^2,\beta=1/\|\hat{v}\|^2,\alpha_0=0$, and $\gamma$ varied in $\{10^{-8},10^{-7},\ldots,10^{-3}\}$. The real ground metric was supposed to be smooth, thus $p=2$ was taken. The iteration step size for all primal variables was $\tau=2\times 10^{-3}$, and the step size for dual variables was $\sigma=10^{-3}$. We took iterations of $6\times10^4$ times. In observation, the iteration had converged in such a setting.
	The results are shown in Figure \ref{noise-metric}.
	
	\begin{figure}[htbp]
		
		\centering
		\includegraphics[width=0.9\textwidth]{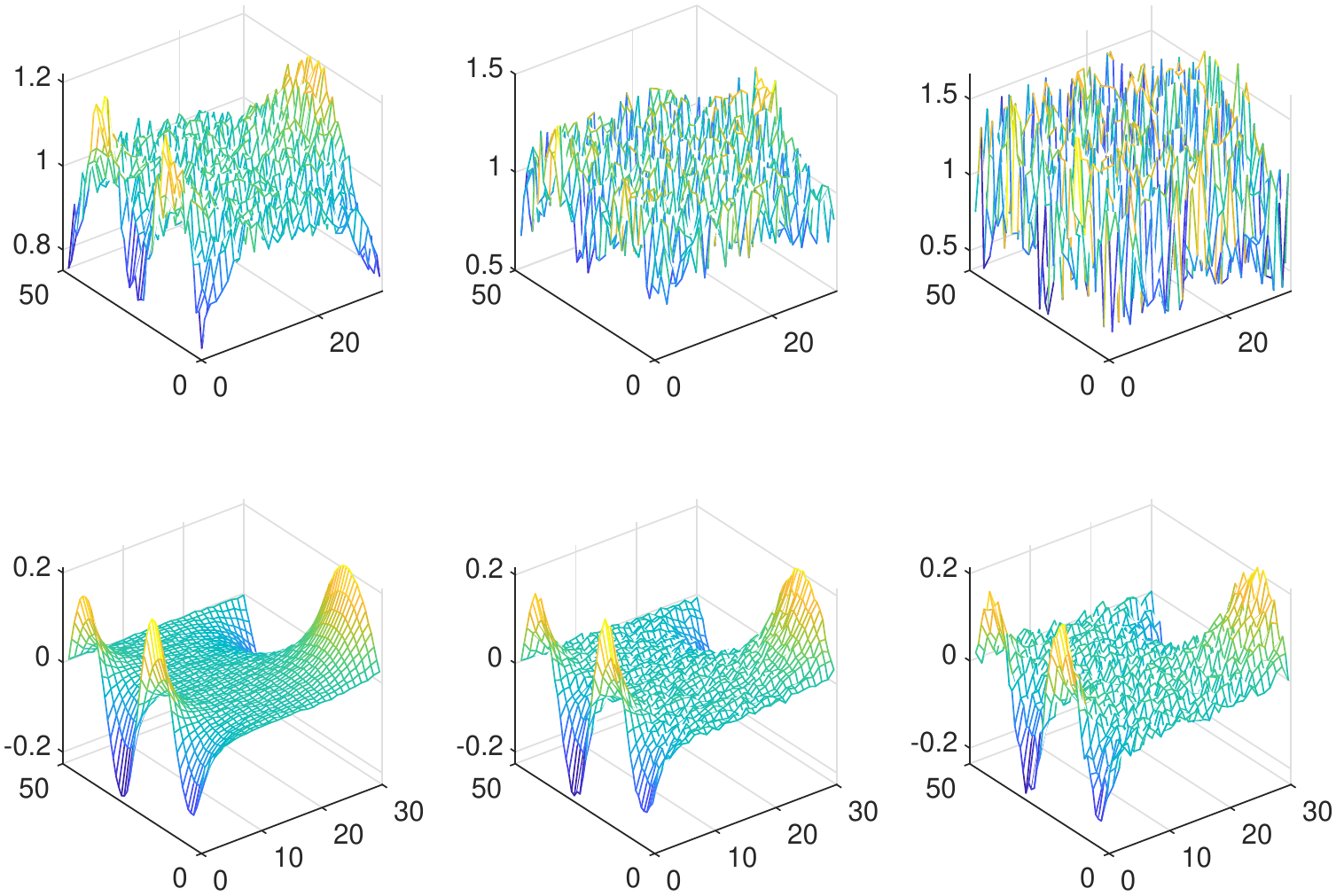}
		\caption{The noised $\hat{\rho},\hat{\vv}$ with different noise factor. The first row is for noised $\hat{\rho}$ with $\epsilon^*=0.1,0.4,1$. The second row is for noised $\hat{\vv}$ with $\epsilon^*=0.1,0.4,1$.}
		\label{noise-rho-v}
	\end{figure}
	
	%
	%
	%
	%
	%
	
	\begin{figure}[htbp]
		\centering
		\includegraphics[width=0.9\textwidth]{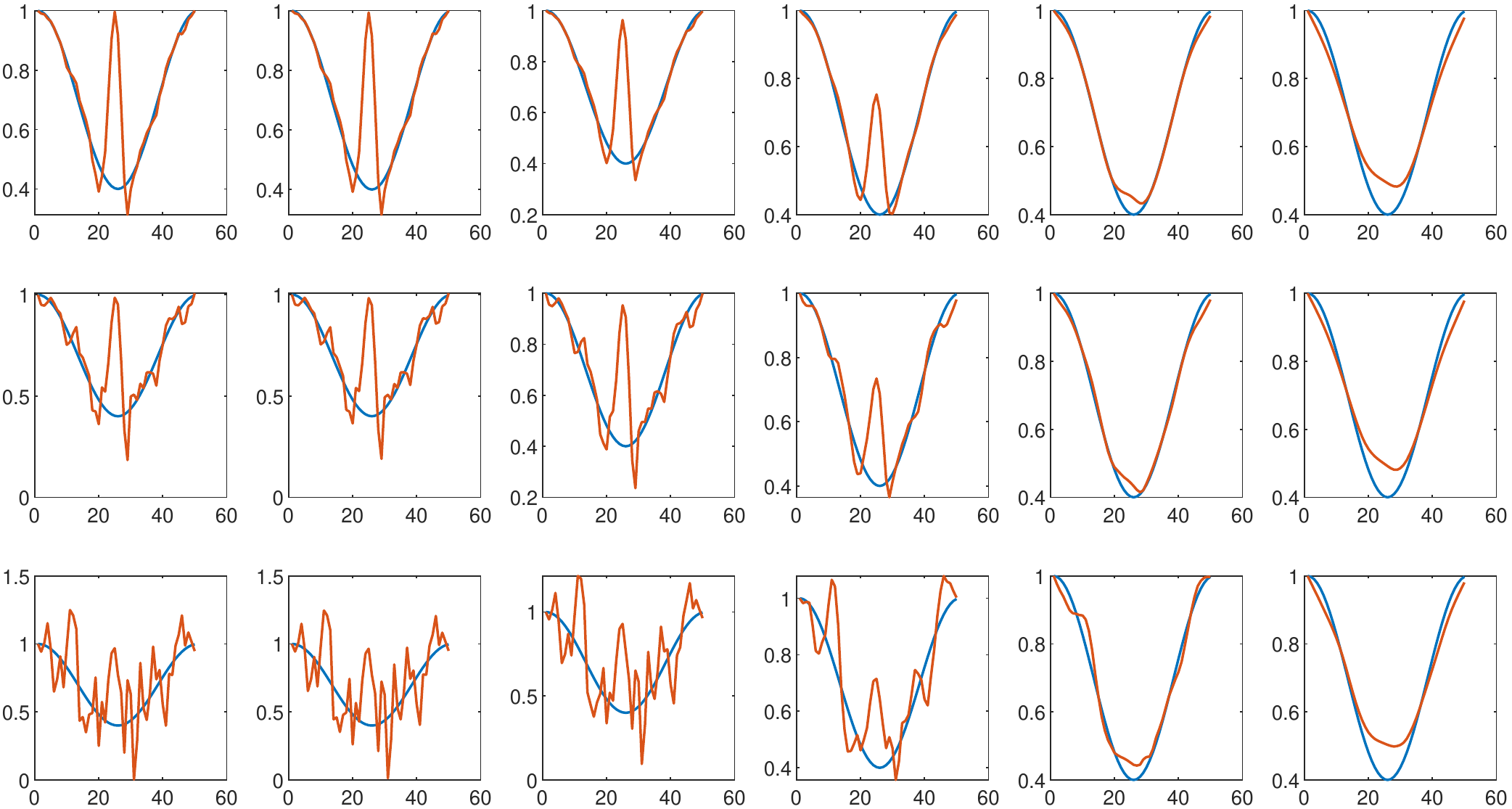}
		\caption{Recreated ground metric $G_M$ when the observations were corrupted with noise. In each sub-figure, the red curve is the learned ground metric while the blue curve is the real metric. From up to down, noise factor $\epsilon^*=0.1,0.4,1$. From left to right, $\gamma$ varies from $10^{-8}$ to $10^{-3}$.}
		\label{noise-metric}
	\end{figure}
	
\end{test}

\begin{test}
	\label{test:kernel}
	\rm We also tested the robustness of Model \ref{disc-inv-kernel}. We tested the example in the 1-dimensional basis where $K(x)=\exp(-x^2/0.1)$. The space-time was discretized into a $50\times 30$ grid. The only information about $K$ was that $K(0,0)=1$. And in the iteration, we fixed the value of $\tilde{K}(0)$ to be 1.
	The scaling parameter $\alpha=1/\|\hat{\rho}\|^2,\beta=1/\|\hat{v}\|^2,\alpha_0=0$. $\gamma$ varied in $\{10^{-6},10^{-5},10^{-4},10^{-3}\}$. Due to the quadratic exponential format, the kernel was considered smooth, thus $p=2$ was taken. The iteration step size for all primal variables was $\tau=10^{-3}$, and the step size for dual variables was $\sigma=10^{-3}$ as well. Still, the noise factor $\epsilon^*$ varied in $\{0.1,0.4,1\}$.
	We took $3\times 10^6$ iterations. The variables fully converged in our observation. The computational results are compiled in Figure \ref{noise-kernel}.

	\begin{figure}[htbp]
		
		\centering
		\includegraphics[width=0.9\textwidth]{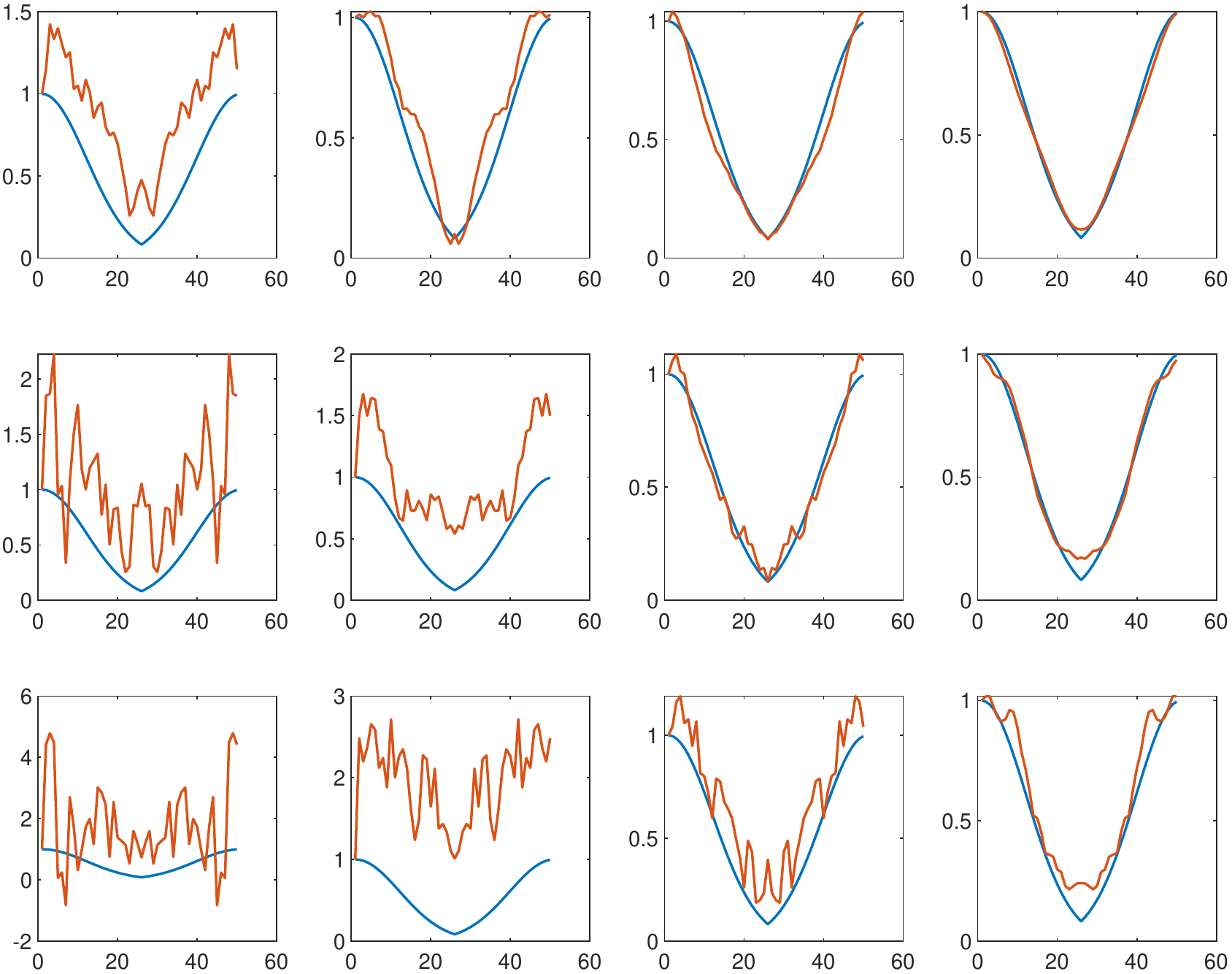}
		\caption{Recreated convolution kernel $K(x,0)$ when the observations were corrupted with noise.
			From up to down, noise factor $\epsilon^*=0.1,0.4,1$. From left to right, $\gamma$ varies from $10^{-6}$ to $10^{-3}$.}
		\label{noise-kernel}
	\end{figure}

\end{test}

\begin{test}
	\label{test:Breg_metric}
	\rm In this test, we tested the efficiency of Algorithm \ref{alg:Bregman}. We worked on the same data as Test \ref{test:noise-metric}. The observations $\hat{\rho},\hat{\vv}$ were corrupted by noise at a level $\epsilon^*=1$. We took $\gamma=10^{-2}$, and the first 7 Bregman iterations are shown in Figure \ref{Breg_metric_-2_1}.
	\begin{figure}[htbp]
		\centering
		\includegraphics[width=0.9\textwidth]{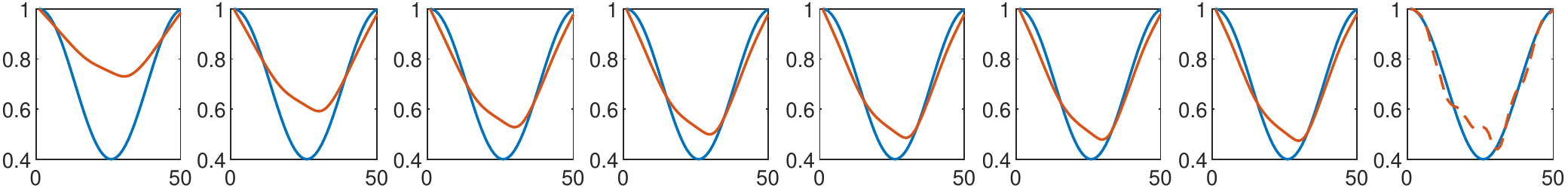}
		\caption{The recreated metric kernel from noised data in first 7 Bregman iterations. $\gamma=10^{-2}$.
			(The last figure is for recreated ground metric from Algorithm \ref{alg:primal-dual} when taking  $\gamma=10^{-4}$ as an appropriate optimal value.)}
		\label{Breg_metric_-2_1}
	\end{figure}
\end{test}

\begin{remark}
	\rm In Test \ref{test:Breg_metric}, by applying Bregman iteration, starting with a non-optimal $\gamma$, we had better result for ground metric. The recreated ground metric even converged to the one recreated with the optimal $\gamma$.
\end{remark}
Several more numerical results are provided in Appendix \ref{appen:comp}.

\section{Discussion}
In this paper, we introduced some inverse-problem models for MFGs, which are PDE-constrained variational problems. 

MFGs involve PDE systems, including the Fokker-Planck equations and HJE, which arise naturally from physical systems, such as Schr{\"o}dinger equations, Schrodinger bridge problems, and optimal control problems in finance, robotics path planning and game theory. These problems are mean-field descriptions of classical observations in finite agents' dynamics. One can view our model as a mean-field generalization of classic (finite-agent) inverse problems. It is an extension of classical observations to density space. Mean-field limit analyses and the well-posedness of the model are challenging future directions in this uncultivated area.

The MFG PDE system involves nonlinear Hamiltonian  constraints in the sample space. Our method is only a first trial towards its computation, based on the classical PDE method. The study of convexity properties, parallel computation, and designing fast and efficient algorithms for the proposed models are also interesting future directions.

From the lens of information science and machine learning, our model fits the goal of learning Hamiltonians and physics from data and observations. Here we have concretely modeled the physics by a Hamiltonian on the underlying sample space and design the cost functional in the density space to fit the model. To treat the density variational problem in our model, many information sciences and machine learning approaches can be considered in the future. For example, we can further apply the other perspectives of density using probability models in machine learning and transport information geometry~\cite{li_transport_2020,li_hessian_2020}. Typical models include Gaussian families, generative models, and reinforcement learning~\cite{gu_dynamic_2020,guo_general_2020}. We leave detailed studies of these models for inverse problems in the near future.


\begin{thebibliography}{10}

\bibitem{scholkopf_kernel_2007}
A {Kernel} {Method} for the {Two}-{Sample}-{Problem}.
\newblock In Bernhard Schölkopf, John Platt, and Thomas Hofmann, editors, {\em
  Advances in {Neural} {Information} {Processing} {Systems} 19}. The MIT Press,
  2007.

\bibitem{achdou_partial_2014}
Yves Achdou, Francisco~J. Buera, Jean-Michel Lasry, Pierre-Louis Lions, and
  Benjamin Moll.
\newblock Partial differential equation models in macroeconomics.
\newblock {\em Philosophical Transactions of the Royal Society A: Mathematical,
  Physical and Engineering Sciences}, 372(2028):20130397, 2014.

\bibitem{achdou_mean_2010}
Yves Achdou and Italo Capuzzo-Dolcetta.
\newblock Mean {Field} {Games}: {Numerical} {Methods}.
\newblock {\em SIAM Journal on Numerical Analysis}, 48(3):1136--1162, 2010.

\bibitem{achdou_income_2017}
Yves Achdou, Jiequn Han, Jean-Michel Lasry, Pierre-Louis Lions, and Benjamin
  Moll.
\newblock Income and {Wealth} {Distribution} in {Macroeconomics}: {A}
  {Continuous}-{Time} {Approach}.
\newblock Technical Report w23732, National Bureau of Economic Research, 2017.

\bibitem{bellomo_variational_2017}
Jean-David Benamou, Guillaume Carlier, and Filippo Santambrogio.
\newblock Variational {Mean} {Field} {Games}.
\newblock In Nicola Bellomo, Pierre Degond, and Eitan Tadmor, editors, {\em
  Active {Particles}, {Volume} 1}, pages 141--171. Springer International
  Publishing, 2017.

\bibitem{cardaliaguet_master_2015}
Pierre Cardaliaguet, François Delarue, Jean-Michel Lasry, and Pierre-Louis
  Lions.
\newblock The master equation and the convergence problem in mean field games.
\newblock {\em arXiv:1509.02505 [math]}, 2015.

\bibitem{cardaliaguet_mean_2018}
Pierre Cardaliaguet and Charles-Albert Lehalle.
\newblock Mean field game of controls and an application to trade crowding.
\newblock {\em Mathematics and Financial Economics}, 12(3):335--363, 2018.

\bibitem{casgrain_mean_2019}
Philippe Casgrain and Sebastian Jaimungal.
\newblock Mean {Field} {Games} with {Partial} {Information} for {Algorithmic}
  {Trading}.
\newblock {\em arXiv:1803.04094 [math, q-fin]}, 2019.

\bibitem{chambolle_ergodic_2016}
Antonin Chambolle and Thomas Pock.
\newblock On the ergodic convergence rates of a first-order primal–dual
  algorithm.
\newblock {\em Mathematical Programming}, 159(1-2):253--287, 2016.

\bibitem{chow_algorithm_2017}
Yat~Tin Chow, Jerome Darbon, Stanley Osher, and Wotao Yin.
\newblock Algorithm for {Overcoming} the {Curse} of {Dimensionality} {For}
  {Time}-{Dependent} {Non}-convex {Hamilton}–{Jacobi} {Equations} {Arising}
  {From} {Optimal} {Control} and {Differential} {Games} {Problems}.
\newblock {\em Journal of Scientific Computing}, 73(2-3):617--643, 2017.

\bibitem{chow_algorithm_2018}
Yat~Tin Chow, Jerome Darbon, Stanley Osher, and Wotao Yin.
\newblock Algorithm for overcoming the curse of dimensionality for certain
  non-convex {Hamilton}–{Jacobi} equations, projections and differential
  games.
\newblock {\em Annals of Mathematical Sciences and Applications},
  3(2):369--403, 2018.

\bibitem{chow_algorithm_2019}
Yat~Tin Chow, Wuchen Li, Stanley Osher, and Wotao Yin.
\newblock Algorithm for {Hamilton}–{Jacobi} {Equations} in {Density} {Space}
  {Via} a {Generalized} {Hopf} {Formula}.
\newblock {\em Journal of Scientific Computing}, 80(2):1195--1239, 2019.

\bibitem{de_paola_mean_2019}
Antonio De~Paola, Vincenzo Trovato, David Angeli, and Goran Strbac.
\newblock A {Mean} {Field} {Game} {Approach} for {Distributed} {Control} of
  {Thermostatic} {Loads} {Acting} in {Simultaneous} {Energy}-{Frequency}
  {Response} {Markets}.
\newblock {\em IEEE Transactions on Smart Grid}, 10(6):5987--5999, 2019.

\bibitem{e_mean-field_2019}
Weinan E, Jiequn Han, and Qianxiao Li.
\newblock A mean-field optimal control formulation of deep learning.
\newblock {\em Research in the Mathematical Sciences}, 6(1):10, 2019.

\bibitem{elamvazhuthi_optimal_2020}
Karthik Elamvazhuthi, Siting Liu, Wuchen Li, and Stanley Osher.
\newblock Optimal {Transport} of {Nonlinear} {Control}-{Affine} {Systems}.
\newblock 2020.

\bibitem{engquist_seismic_2018}
Björn Engquist and Yunan Yang.
\newblock Seismic {Imaging} and {Optimal} {Transport}.
\newblock {\em arXiv:1808.04801 [math]}, 2018.

\bibitem{firoozi_optimal_2017}
Dena Firoozi and Peter~E. Caines.
\newblock An optimal execution problem in finance targeting the market trading
  speed: {An} {MFG} formulation.
\newblock In {\em 2017 {IEEE} 56th {Annual} {Conference} on {Decision} and
  {Control} ({CDC})}, pages 7--14. IEEE, 2017.

\bibitem{gomes_economic_2015}
Diogo~A. Gomes, Levon Nurbekyan, and Edgard~A. Pimentel.
\newblock {\em Economic {Models} and {Mean}-field {Games} {Theory}}.
\newblock 2015.

\bibitem{gomes_mean-field_2020}
Diogo~A. Gomes and João Saúde.
\newblock A {Mean}-{Field} {Game} {Approach} to {Price} {Formation}.
\newblock {\em Dynamic Games and Applications}, 2020.

\bibitem{gu_dynamic_2020}
Haotian Gu, Xin Guo, Xiaoli Wei, and Renyuan Xu.
\newblock Dynamic {Programming} {Principles} for {Learning} {MFCs}.
\newblock {\em arXiv:1911.07314 [math]}, 2020.

\bibitem{guo_general_2020}
Xin Guo, Anran Hu, Renyuan Xu, and Junzi Zhang.
\newblock A {General} {Framework} for {Learning} {Mean}-{Field} {Games}.
\newblock {\em arXiv:2003.06069 [cs, math, stat]}, 2020.

\bibitem{morel_mean_2011}
Olivier Guéant, Jean-Michel Lasry, and Pierre-Louis Lions.
\newblock Mean {Field} {Games} and {Applications}.
\newblock In J.-M. Morel, F.~Takens, and B.~Teissier, editors, {\em
  Paris-{Princeton} {Lectures} on {Mathematical} {Finance} 2010}, volume 2003,
  pages 205--266. Springer Berlin Heidelberg, 2011.

\bibitem{jaimungal_mean-field_2015}
Sebastian Jaimungal and Mojtaba Nourian.
\newblock Mean-{Field} {Game} {Strategies} for a {Major}-{Minor} {Agent}
  {Optimal} {Execution} {Problem}.
\newblock {\em SSRN Electronic Journal}, 2015.

\bibitem{kachroo_inverse_2016}
Pushkin Kachroo, Shaurya Agarwal, and Shankar Sastry.
\newblock Inverse {Problem} for {Non}-{Viscous} {Mean} {Field} {Control}:
  {Example} {From} {Traffic}.
\newblock {\em IEEE Transactions on Automatic Control}, 61(11):3412--3421,
  2016.

\bibitem{kizilkale_integral_2019}
Arman~C. Kizilkale, Rabih Salhab, and Roland~P. Malhamé.
\newblock An integral control formulation of mean field game based large scale
  coordination of loads in smart grids.
\newblock {\em Automatica}, 100:312--322, 2019.

\bibitem{lasry_jeux_2006-1}
Jean-Michel Lasry and Pierre-Louis Lions.
\newblock Jeux à champ moyen. {I} – {Le} cas stationnaire.
\newblock {\em Comptes Rendus Mathematique}, 343(9):619--625, 2006.

\bibitem{lasry_jeux_2006}
Jean-Michel Lasry and Pierre-Louis Lions.
\newblock Jeux à champ moyen. {II} – {Horizon} fini et contrôle optimal.
\newblock {\em Comptes Rendus Mathematique}, 343(10):679--684, 2006.

\bibitem{lasry_mean_2007}
Jean-Michel Lasry and Pierre-Louis Lions.
\newblock Mean field games.
\newblock {\em Japanese Journal of Mathematics}, 2(1):229--260, 2007.

\bibitem{lehalle_mean_2019}
Charles-Albert Lehalle and Charafeddine Mouzouni.
\newblock A {Mean} {Field} {Game} of {Portfolio} {Trading} and {Its}
  {Consequences} {On} {Perceived} {Correlations}.
\newblock {\em arXiv:1902.09606 [math, q-fin]}, 2019.

\bibitem{li_learning_2018}
Ruilin Li, Xiaojing Ye, Haomin Zhou, and Hongyuan Zha.
\newblock Learning to {Match} via {Inverse} {Optimal} {Transport}.
\newblock {\em arXiv:1802.03644 [cs, stat]}, 2018.

\bibitem{li_hessian_2020}
Wuchen Li.
\newblock Hessian metric via transport information geometry.
\newblock {\em arXiv:2003.10526 [math-ph]}, 2020.

\bibitem{li_transport_2020}
Wuchen Li.
\newblock Transport information geometry {I}: {Riemannian} calculus on
  probability simplex.
\newblock {\em arXiv:1803.06360 [math]}, 2020.

\bibitem{li_parallel_2018}
Wuchen Li, Ernest~K. Ryu, Stanley Osher, Wotao Yin, and Wilfrid Gangbo.
\newblock A {Parallel} {Method} for {Earth} {Mover}’s {Distance}.
\newblock {\em Journal of Scientific Computing}, 75(1):182--197, 2018.

\bibitem{lin_apac-net:_2020}
Alex~Tong Lin, Samy~Wu Fung, Wuchen Li, Levon Nurbekyan, and Stanley~J. Osher.
\newblock {APAC}-{Net}: {Alternating} the {Population} and {Agent} {Control}
  via {Two} {Neural} {Networks} to {Solve} {High}-{Dimensional} {Stochastic}
  {Mean} {Field} {Games}.
\newblock {\em arXiv:2002.10113 [cs, math, stat]}, 2020.

\bibitem{liu_multilevel_2019}
Jialin Liu, Wotao Yin, Wuchen Li, and Yat~Tin Chow.
\newblock Multilevel {Optimal} {Transport}: a {Fast} {Approximation} of
  {Wasserstein}-1 distances.
\newblock {\em arXiv:1810.00118 [math, stat]}, 2019.

\bibitem{OsherBurgerGoldfarbXuYin2005_iterative}
Stanley Osher, Martin Burger, Donald Goldfarb, Jinjun Xu, and Wotao Yin.
\newblock An iterative regularization method for total variation-based image
  restoration.
\newblock {\em Multiscale Modeling and Simulation}, 4(2):460--489, 2005.

\bibitem{stuart_inverse_2020}
Andrew~M. Stuart and Marie-Therese Wolfram.
\newblock Inverse {Optimal} {Transport}.
\newblock {\em SIAM Journal on Applied Mathematics}, 80(1):599--619, 2020.

\bibitem{yang_mean_2018}
Yaodong Yang, Rui Luo, Minne Li, Ming Zhou, Weinan Zhang, and Jun Wang.
\newblock Mean {Field} {Multi}-{Agent} {Reinforcement} {Learning}.
\newblock {\em arXiv:1802.05438 [cs]}, 2018.

\bibitem{yin_bregman_2008}
Wotao Yin, Stanley Osher, Donald Goldfarb, and Jerome Darbon.
\newblock Bregman {Iterative} {Algorithms} for l1-{Minimization} with
  {Applications} to {Compressed} {Sensing}.
\newblock {\em SIAM Journal on Imaging Sciences}, 1(1):143--168, 2008.

\end{thebibliography}

\newpage

\appendix
\addappheadtotoc

\section{Details of the algorithm in Subsection \ref{chap:disc-alg}}
\label{apped:alg}
In this section, we present the iteration steps of Algorithm \ref{alg:primal-dual} in Subsection \ref{chap:disc-alg}.

The first algorithm is for Model \ref{disc-inv-ground-metric}, where $\mathcal{F}$ is the integral of a convex function $F$. To solve it, we introduce the dual variables for the constraints of (\ref{MFG-inv-disc1}): $(\psi_{i+\frac{e_1}{2},j},\psi_{i+\frac{e_2}{2},j}), i\in V,j=2,3,\ldots,n$, $\Phi_{i,j},i\in V,j=1,2,\ldots,n$, $\chi_{i,j},i\in V,j=1,2,\ldots,n$, and $\Theta_{i,j}=\left(\Theta_{x,i_2,j},\Theta_{y,i_1,j}\right)^T,i\in V,j=1,2,\ldots,n$. Summing the objective function and Lagrangian multipliers, we obtain
\begin{equation*}
\begin{split}
&\operatorname*{min}_{g_0,\rho,v}
\operatorname*{max}_{\psi,\Phi,\chi,\Theta}L(g_0,\rho,v;\psi,\Phi,\chi,\Theta):=\\
&\operatorname*{min}_{g_0,\rho,v}
\operatorname*{max}_{\psi,\Phi,\chi,\Theta}
\sum_{i\in V}\sum_{j=1}^{n+1}\frac{\alpha}{2}\left(\rho_{i,j-\frac{1}{2}}-\hat{\rho}_{i,j-\frac{1}{2}}\right)^2+
\sum_{i\in V}\sum_{j=1}^n\frac{\beta}{2}\|(v_{i+\frac{e_v}{2},j})_v-(\hat{v}_{i+\frac{e_v}{2},j})_v\|^2
\\
&\quad
\begin{split}
&+\sum_{i\in V}
\frac{\alpha_0}{2\triangle t}
\left(
\left(\rho_{i,\frac{1}{2}}-\hat{\rho}_{i,\frac{1}{2}}\right)^2
+
\left(\rho_{i,n+\frac{1}{2}}-\hat{\rho}_{i,n+\frac{1}{2}}\right)^2
\right)
+\sum_{i\in V}\sum_{e_v}\frac{\gamma}{p\triangle t}
\left|\frac{g_{0,i+e_v}-g_{0,i}}{\triangle x}\right|^p
\\
&+\sum_{i\in V}\sum_{j=2}^n
(\psi_{i+\frac{e_v}{2},j})_v\cdot\left(
\left(\frac{\xi_{i+e_v,j-\frac{1}{2}}-\xi_{i,j-\frac{1}{2}}}{\triangle x}\right)_v+\left(\frac{(w_{i+\frac{e_v}{2},j})_v-(w_{i+\frac{e_v}{2},j-1})_v}{\triangle t}\right)
\right)\\
&+\sum_{i\in V}\sum_{j=1}^n
\Phi_{i,j}
\left(
\left(\rho_{i,j+\frac{1}{2}}-\rho_{i,j-\frac{1}{2}}\right)/\triangle t+\sum_{e_v}\left(m_{i+\frac{e_v}{2},j}-m_{i-\frac{e_v}{2},j}\right)/\triangle x
\right)\\
&+\sum_{i\in V}\sum_{j=1}^n
\chi_{i,j}
\left(
\frac{w_{i+\frac{e_1}{2}+e_2,j}-w_{i+\frac{e_1}{2},j}}{\triangle x}-\frac{w_{i+\frac{e_2}{2}+e_1,j}-w_{i+\frac{e_2}{2},j}}{\triangle x}
\right)\\
&+
\sum_{i\in V}\sum_{j=1}^n\Theta_{x,i_2,j}w_{i+\frac{e_1}{2},j}\frac{1}{\triangle x}
+
\sum_{i\in V}\sum_{j=1}^n\Theta_{y,i_1,j}w_{i+\frac{e_2}{2},j}\frac{1}{\triangle x},
\end{split}
\end{split}
\end{equation*}
where
\begin{equation*}
\left\{
\begin{split}
&\xi_{i,j-\frac{1}{2}}=
\frac{1}{2}(v_{i+\frac{e_v}{2},j})_v^TG_{M,i}(v_{i+\frac{e_v}{2},j})_v
-
F'(\rho_{i,j-\frac{1}{2}}),\quad i\in V,j=2,3,\ldots,n\\
&
(w_{i+\frac{e_v}{2},j})_v=G_{M,i}(v_{i+\frac{e_v}{2},j})_v,\quad i\in V,j=1,2,\ldots,n\\
&
(m_{i+\frac{e_v}{2},j})_v=\rho_{i,j-\frac{1}{2}}(v_{i+\frac{e_v}{2},j})_v,\quad i\in V,j=1,2,\ldots,n
\end{split}
\right.
\end{equation*}
By changing the order of sums, we further obtain:
\begin{align*}
\begin{split}
&\sum_{i\in V}\sum_{j=2}^n
(\psi_{i+\frac{e_v}{2},j})_v\cdot\left(
\left(\frac{\xi_{i+e_v,j-\frac{1}{2}}-\xi_{i,j-\frac{1}{2}}}{\triangle x}\right)_v+\left(\frac{(w_{i+\frac{e_v}{2},j})_v-(w_{i+\frac{e_v}{2},j-1})_v}{\triangle t}\right)
\right)\\
=&\sum_{i\in V}\sum_{j=2}^n
\sum_{e_v}\frac{\psi_{i-\frac{e_v}{2},j}-\psi_{i+\frac{e_v}{2},j}}{\triangle x}\xi_{i,j-\frac{1}{2}}
+
\sum_{i\in V}\sum_{j=2}^{n-1}
\frac{(\psi_{i+\frac{e_v}{2},j})_v-(\psi_{i+\frac{e_v}{2},j+1})_v}{\triangle t}\cdot(w_{i+\frac{e_v}{2},j})_v
\\
&\quad
+
\sum_{i\in V}\frac{(\psi_{i+\frac{e_v}{2},n})_v}{\triangle t}\cdot(w_{i+\frac{e_v}{2},n})_v
-\sum_{i\in V}\frac{(\psi_{i+\frac{e_v}{2},2})_v}{\triangle t}\cdot(w_{i+\frac{e_v}{2},1})_v,
\end{split}\\
\begin{split}
&\sum_{i\in V}\sum_{j=1}^n
\Phi_{i,j}
\left(
\left(\rho_{i,j+\frac{1}{2}}-\rho_{i,j-\frac{1}{2}}\right)/\triangle t+\sum_{e_v}\left(m_{i+\frac{e_v}{2},j}-m_{i-\frac{e_v}{2},j}\right)/\triangle x
\right)\\
=&\sum_{i\in V}\sum_{j=2}^n
\frac{\Phi_{i,j-1}-\Phi_{i,j}}{\triangle t}\rho_{i,j-\frac{1}{2}}
+
\sum_{i\in V}\frac{\Phi_{i,n}}{\triangle t}\rho_{i,n+\frac{1}{2}}
-
\sum_{i\in V}\frac{\Phi_{i,1}}{\triangle t}\rho_{i,\frac{1}{2}}
+\sum_{i\in V}\sum_{j=1}^n
\left(
\frac{\Phi_{i,j}-\Phi_{i+e_v,j}}{\triangle x}
\right)_v^T(m_{i+\frac{e_v}{2},j})_v,
\end{split}\\
\begin{split}
&\sum_{i\in V}\sum_{j=1}^n
\chi_{i,j}
\left(
\frac{w_{i+\frac{e_1}{2}+e_2,j}-w_{i+\frac{e_1}{2},j}}{\triangle x}-\frac{w_{i+\frac{e_2}{2}+e_1,j}-w_{i+\frac{e_2}{2},j}}{\triangle x}
\right)\\
=&\sum_{i\in V}\sum_{j=1}^n\frac{\chi_{i-e_2,j}-\chi_{i,j}}{\triangle x}w_{i+\frac{e_1}{2},j}
-
\frac{\chi_{i-e_1,j}-\chi_{i,j}}{\triangle x}w_{i+\frac{e_2}{2},j}.
\end{split}
\end{align*}
It is more convenient to optimize using the primal variables.

In the primal step, we apply gradient descent to the Lagrangian, $L$, with respect to the primal variables, leading to
\begin{equation*}
\left\{
\begin{split}
&\rho_{i,j-\frac{1}{2}}^{k+1}=
\rho_{i,j-\frac{1}{2}}^{k}-
\tau_\rho \frac{\partial}{\partial\rho_{i,j-\frac{1}{2}}}L(g_0^k,\rho^k,v^k;\psi^k,\Phi^k,\chi^k,\Theta^k)\\
&(v_{i+\frac{e_v}{2},j}^{k+1})_v=
(v_{i+\frac{e_v}{2},j}^k)_v-
\tau_\vv\frac{\partial}{\partial(v_{i+\frac{e_v}{2},j})_v}L(g_0^k,\rho^k,v^k;\psi^k,\Phi^k,\chi^k,\Theta^k)
\\
&g_{0,i}^{k+1}=g_{0,i}^{k}
-\tau_{g}\frac{\partial}{\partial g_{0,i}}L(g_0^k,\rho^k,v^k;\psi^k,\Phi^k,\chi^k,\Theta^k),
\end{split}
\right.
\end{equation*}
where $\tau_\rho,\tau_\vv,\tau_g$ are step sizes for $\rho,\vv,g_0$.  We note that when $p=1$, the gradient of $|g_0|$ does not exist at 0, so we take the sub-gradient.
In computation, we apply a different descent step size for each of the primal variables to overcome the multi-scaling issue, which accelerates the iteration rate. We update the dual
variables by:
\begin{equation*}
\left\{
\begin{split}
&(\psi_{i+\frac{e_v}{2},j}^{k+1})_v=(\psi_{i+\frac{e_v}{2},j}^k)_v+
\sigma\left(
\left(\frac{\xi^*_{i+e_v,j-\frac{1}{2}}-\xi^*_{i,j-\frac{1}{2}}}{\triangle x}\right)_v
+
\left(\frac{(w_{i+\frac{e_v}{2},j}^*)_v-(w_{i+\frac{e_v}{2},j-1}^*)_v}{\triangle t}\right)
\right),\\
&\qquad
i\in V,j=2,3,\ldots,n\\
&\Phi_{i,j}^{k+1}=\Phi_{i,j}^{k}+\sigma\left(\left(\rho^*_{i,j+\frac{1}{2}}-\rho^*_{i,j-\frac{1}{2}}\right)/\triangle t+\sum_{e_v}\left(m^*_{i+\frac{e_v}{2},j}-m^*_{i-\frac{e_v}{2},j}\right)/\triangle x\right),\\
&\qquad i\in V,j=1,2,\ldots,n\\
&\chi_{i,j}^{k+1}=\chi_{i,j}^{k}+\sigma\left(
\frac{w^*_{i+\frac{e_1}{2}+e_2,j}-w^*_{i+\frac{e_1}{2},j}}{\triangle x}-\frac{w^*_{i+\frac{e_2}{2}+e_1,j}-w^*_{i+\frac{e_2}{2},j}}{\triangle x}
\right),\,i\in V,j=1,2,\ldots,n\\
&
\Theta_{x,i_2,j}^{k+1}=\Theta_{x,i_2,j}^{k}+\sigma\left(\sum_{i_1}w^*_{i+\frac{e_1}{2},j}\right),\quad i_2=1,2,\ldots,m,j=1,2\ldots,n\\
&\Theta_{y,i_1,j}^{k+1}=\Theta_{y,i_1,j}^{k}+\sigma
\left(
\sum_{i_2}w^*_{i+\frac{e_2}{2},j}
\right),\quad i_1=1,2,\ldots,m,j=1,2\ldots,n,
\end{split}
\right.
\end{equation*}
where
\begin{equation*}
\left\{
\begin{split}
&\rho_{i,j-\frac{1}{2}}^*=
2\rho_{i,j-\frac{1}{2}}^{k+1}-\rho_{i,j-\frac{1}{2}}^k
\quad i\in V,j=1,2,\ldots,n+1\\
&(v_{i+\frac{e_v}{2},j}^*)_v=
2(v_{i+\frac{e_v}{2},j}^{k+1})_v-(v_{i+\frac{e_v}{2},j}^k)_v\quad i\in V,j=1,2\ldots,n\\
&g_{0,i}^*=2g_{0,i}^{k+1}-g_{0,i}^k\quad i\in V\\
&G_{M,i}^*=
\left(
\begin{array}{cc}
f_{11}({g_{0,i}^*})&
f_{12}({g_{0,i}^*})\\
f_{21}({g_{0,i}^*})&
f_{22}({g_{0,i}^*})
\end{array}
\right)
\quad i\in V\\
&\xi^*_{i,j-\frac{1}{2}}=
\frac{1}{2}{(v_{i+\frac{e_v}{2},j}^*)_v}^TG^*_{M,i}(v_{i+\frac{e_v}{2},j}^*)_v
-
F'(\rho^*_{i,j-\frac{1}{2}})\quad i\in V,j=2,3,\ldots,n\\
&
(w_{i+\frac{e_v}{2},j}^*)_v=G^*_{M,i}(v_{i+\frac{e_v}{2},j}^*)_v\quad i\in V,j=1,2,\ldots,n\\
&
(m_{i+\frac{e_v}{2},j}^*)_v=\rho^*_{i,j-\frac{1}{2}}(v_{i+\frac{e_v}{2},j}^*)_v\quad i\in V,j=1,2,\ldots,n
\end{split}
\right.
\end{equation*}

The partial difference of Lagrangian with respect to $\rho,\vv,g_0$ can be explicitly written as:
\begin{equation*}
\small{
\begin{split}
&\frac{\partial}{\partial\rho_{i,j-\frac{1}{2}}}L(g_0^k,\rho^k,v^k;\psi^k,\Phi^k,\chi^k,\Theta^k)=\\
&\left\{
\begin{split}
&\left(
\alpha+\frac{\alpha_0}{\triangle t}
\right)\left(\rho^k_{i,\frac{1}{2}}-
\hat{\rho}_{i,\frac{1}{2}}\right)
-\frac{\Phi^k_{i,1}}{\triangle t}+
\left(
\frac{\Phi^k_{i,1}-\Phi^k_{i+e_v,1}}{\triangle x}
\right)_v\cdot
(v_{i+\frac{e_v}{2},1}^k)_v
,\quad j=1\\
&\alpha\left(
\rho^k_{i,j-\frac{1}{2}}-\hat{\rho}_{i,j-\frac{1}{2}}
\right)
+
\frac{\Phi^k_{i,j-1}-\Phi^k_{i,j}}{\triangle t}
+
\left(
\frac{\Phi^k_{i,j}-\Phi^k_{i+e_v,j}}{\triangle x}
\right)_v\cdot(v_{i+\frac{e_v}{2},j}^k)_v
\\
&\qquad+\sum_{e_v}\frac{\psi^k_{i+\frac{e_v}{2},j}-\psi^k_{i-\frac{e_v}{2},j}}{\triangle x}F''(\rho^k_{i,j-\frac{1}{2}}),\quad
j=2,3,\ldots,n\\
&\left(\alpha+\frac{\alpha_0}{\triangle t}
\right)\left(\rho^k_{i,n+\frac{1}{2}}-
\hat{\rho}_{i,n+\frac{1}{2}}\right)+\frac{\Phi^k_{i,n}}{\triangle t},\quad j=n+1,
\end{split}
\right.
\end{split}
}
\end{equation*}

\begin{equation*}
\small
{
\begin{split}
&\frac{\partial}{\partial(v_{i+\frac{e_v}{2},j})_v}L(g_0^k,\rho^k,v^k;\psi^k,\Phi^k,\chi^k,\Theta^k)=\\
&\left\{
\begin{split}
&\beta\left(
(v_{i+\frac{e_v}{2},1}^k)_v-(\hat{v}_{i+\frac{e_v}{2},1})_v
\right)
+
\rho^k_{i,\frac{1}{2}}
\left(
\frac{\Phi^k_{i,1}-\Phi^k_{i+e_v,1}}{\triangle x}
\right)_v
-
G_{M,i}^k\frac{(\psi_{i+\frac{e_v}{2},2}^k)_v}{\triangle t}
\\
&
\qquad
+
\frac{\chi^k_{i-e_2,1}-\chi^k_{i,1}}{\triangle x}
\left(
\begin{array}{c}
f_{11}(g_{0,i}^k)\\
f_{12}(g_{0,i}^k)
\end{array}
\right)
-
\frac{\chi^k_{i-e_1,1}-\chi^k_{i,1}}{\triangle x}
\left(
\begin{array}{c}
f_{12}(g_{0,i}^k)\\
f_{22}(g_{0,i}^k)
\end{array}
\right)
+
\frac{\Theta^k_{x,i_2,1}}{\triangle x}
\left(
\begin{array}{c}
f_{11}(g_{0,i}^k)\\
f_{12}(g_{0,i}^k)
\end{array}
\right)
\\
&
\qquad
+
\frac{\Theta^k_{y,i_1,1}}{\triangle x}
\left(
\begin{array}{c}
f_{12}(g_{0,i}^k)\\
f_{22}(g_{0,i}^k)
\end{array}
\right)
,\quad j=1\\
&\beta\left(
(v_{i+\frac{e_v}{2},j}^k)_v-(\hat{v}_{i+\frac{e_v}{2},j})_v
\right)
+
\rho^k_{i,j-\frac{1}{2}}
\left(
\frac{\Phi^k_{i,j}-\Phi^k_{i+e_v,j}}{\triangle x}
\right)_v
\\
&\qquad
+
\sum_{e_v}
\left(
\frac{\psi^k_{i-\frac{e_v}{2},j}-\psi^k_{i+\frac{e_v}{2},j}}{\triangle x}
\right)
G_{M,i}^k(v_{i+\frac{e_v}{2},j}^k)_v+
G_{M,i}^k
\left(
\frac{\psi^k_{i+\frac{e_v}{2},j}-\psi^k_{i+\frac{e_v}{2},j+1}}{\triangle t}
\right)_v
\\
&\qquad
+
\frac{\chi^k_{i-e_2,j}-\chi^k_{i,j}}{\triangle x}
\left(
\begin{array}{c}
f_{11}(g_{0,i}^k)\\
f_{12}(g_{0,i}^k)
\end{array}
\right)
-
\frac{\chi^k_{i-e_1,j}-\chi^k_{i,j}}{\triangle x}
\left(
\begin{array}{c}
f_{12}(g_{0,i}^k)\\
f_{22}(g_{0,i}^k)
\end{array}
\right)
+
\frac{\Theta^k_{x,i_2,j}}{\triangle x}
\left(
\begin{array}{c}
f_{11}(g_{0,i}^k)\\
f_{12}(g_{0,i}^k)
\end{array}
\right)\\
&
\qquad
+
\frac{\Theta^k_{y,i_1,j}}{\triangle x}
\left(
\begin{array}{c}
f_{12}(g_{0,i}^k)\\
f_{22}(g_{0,i}^k)
\end{array}
\right)
,\quad j=2,3,\ldots,n-1\\
&\beta\left(
(v_{i+\frac{e_v}{2},n}^k)_v-(\hat{v}_{i+\frac{e_v}{2},n})_v
\right)
+
\rho^k_{i,n-\frac{1}{2}}
\left(
\frac{\Phi^k_{i,n}-\Phi^k_{i+e_v,n}}{\triangle x}
\right)_v
\\
&
\qquad
+
\sum_{e_v}
\left(
\frac{\psi^k_{i-\frac{e_v}{2},n}-\psi^k_{i+\frac{e_v}{2},n}}{\triangle x}
\right)
G_{M,i}^k(v_{i+\frac{e_v}{2},n}^k)_v+
G_{M,i}^k\frac{(\psi_{i+\frac{e_v}{2},n}^k)_v}{\triangle t}
\\
&
\qquad
+
\frac{\chi^k_{i-e_2,n}-\chi^k_{i,n}}{\triangle x}
\left(
\begin{array}{c}
f_{11}(g_{0,i}^k)\\
f_{12}(g_{0,i}^k)
\end{array}
\right)
-
\frac{\chi^k_{i-e_1,n}-\chi^k_{i,n}}{\triangle x}
\left(
\begin{array}{c}
f_{12}(g_{0,i}^k)\\
f_{22}(g_{0,i}^k)
\end{array}
\right)
+
\frac{\Theta^k_{x,i_2,n}}{\triangle x}
\left(
\begin{array}{c}
f_{11}(g_{0,i}^k)\\
f_{12}(g_{0,i}^k)
\end{array}
\right)
\\
&
\qquad
+
\frac{\Theta^k_{y,i_1,n}}{\triangle x}
\left(
\begin{array}{c}
f_{12}(g_{0,i}^k)\\
f_{22}(g_{0,i}^k)
\end{array}
\right)
,\quad j=n,
\end{split}
\right.
\end{split}
}
\end{equation*}

\begin{equation*}
\begin{split}
&\frac{\partial}{\partial g_{0,i}}L(g_0^k,\rho^k,v^k;\psi^k,\Phi^k,\chi^k,\Theta^k)=\\
&
\quad
\begin{split}
&\sum_{j=2}^n\sum_{e_v}
\left(
\frac{\psi^k_{i-\frac{e_v}{2},j}-\psi^k_{i+\frac{e_v}{2},j}}{\triangle x}\right)\frac{1}{2}
{(v_{i+\frac{e_v}{2},j}^k)_v}^T
\left(
\begin{array}{cc}
f_{11}'(g_{0,i}^k)&f_{12}'(g_{0,i}^k)\\
f_{21}'(g_{0,i}^k)&f_{22}'(g_{0,i}^k)
\end{array}
\right)
(v_{i+\frac{e_v}{2},j}^k)_v
\\
&
+
\sum_{j=2}^{n-1}\left(
\frac{\psi^k_{i+\frac{e_v}{2},j}-\psi^k_{i+\frac{e_v}{2},j+1}}{\triangle x}
\right)
\left(
\begin{array}{cc}
f_{11}'(g_{0,i}^k)&f_{12}'(g_{0,i}^k)\\
f_{21}'(g_{0,i}^k)&f_{22}'(g_{0,i}^k)
\end{array}
\right)
(v_{i+\frac{e_v}{2},j}^k)_v
\\
&
+
\left(
\frac{({\psi^k_{i+\frac{e_v}{2},n}})_v}{\triangle t}
\right)^T
\left(
\begin{array}{cc}
f_{11}'(g_{0,i}^k)&f_{12}'(g_{0,i}^k)\\
f_{21}'(g_{0,i}^k)&f_{22}'(g_{0,i}^k)
\end{array}
\right)
(v_{i+\frac{e_v}{2},n}^k)_v
\\
&
-
\left(
\frac{({\psi^k_{i+\frac{e_v}{2},2}})_v}{\triangle t}
\right)^T
\left(
\begin{array}{cc}
f_{11}'(g_{0,i}^k)&f_{12}'(g_{0,i}^k)\\
f_{21}'(g_{0,i}^k)&f_{22}'(g_{0,i}^k)
\end{array}
\right)
(v_{i+\frac{e_v}{2},1}^k)_v
\\
&
+
\sum_{j=1}^n
\left(
\frac{\chi^k_{i-e_2,j}-\chi^k_{i,j}}{\triangle x}(f'_{11}(g_{0,i}^k),0)
+
\frac{\chi^k_{i-e_2,j}-\chi^k_{i,j}}{\triangle x}(0,f_{12}'(g_{0,i}^k))
-
\frac{\chi^k_{i-e_1,j}-\chi^k_{i,j}}{\triangle x}(f'_{21}(g_{0,i}^k),0)
\right.\\
&\left.
-\frac{\chi^k_{i-e_1,j}-\chi^k_{i,j}}{\triangle x}(0,f'_{22}(g_{0,i}^k))
\right)
(v_{i+\frac{e_v}{2},j}^k)_v
+
\sum_{j=1}^n
\left(
\frac{\Theta^k_{x,i_2,j}}{\triangle x}(f'_{11}(g_{0,i}^k),0)
+
\frac{\Theta^k_{x,i_2,j}}{\triangle x}(0,f'_{12}(g_{0,i}^k))
\right.
\\
&
\left.
+
\frac{\Theta^k_{y,i_1,j}}{\triangle x}(f'_{21}(g_{0,i}^k),0)
+
\frac{\Theta^k_{y,i_1,j}}{\triangle x}(0,f'_{22}(g_{0,i}^k))
\right)(v_{i+\frac{e_v}{2},j}^k)_v\\
&
+
\sum_{e_v}
\frac{\gamma}{\triangle t \triangle x^p}
|g_{0,i}^k-g_{0,i-e_v}^k|^{p-1}
sign(g_{0,i}^k-g_{0,i-e_v}^k)
\\
&
+
\sum_{e_v}
\frac{\gamma}{\triangle t \triangle x^p}
|g_{0,i}^k-g_{0,i+e_v}^k|^{p-1}
sign(g_{0,i}^k-g_{0,i+e_v}^k).
\end{split}
\end{split}
\end{equation*}

We apply a similar primal-dual algorithm to Model \ref{disc-inv-kernel}. Its details are omitted.

\begin{remark}
	\rm The algorithm is also parallelizable. In the primal step, we can also substitute the gradient descend with a proximal operator as in~\cite{chambolle_ergodic_2016} for acceleration, which keeps the algorithm parallelizable.
\end{remark}

\section{Complementary computational results}
\label{appen:comp}

\begin{test}
	\label{test:regular-1d-metric}
	\rm We applied our algorithm for Model \ref{disc-inv-ground-metric} with the 1-dimensional basis and without noise to the source data. We took $G_M=g_0$ and discretized the problem on a $50\times 30$ grid. We assumed to have the information about $G_M$ at a single pixel and its value was fixed in the iteration. The scaling parameter $\alpha,\alpha_0,\beta$ were taken as in (\ref{standard-para}), and $\gamma$ varied in $\{10^{-8},10^{-7},\ldots,10^{-3}\}$. Since we considered a smooth real ground metric, we took $p=2$. The iteration step size for all the primal variables was $\tau=2\times10^{-3}$, and the step size for the dual variables was $\sigma=10^{-3}$. We let our algorithm run $6\times10^4$ iterations and observed convergence. Then, we tested our algorithm on two ground metrics of different shapes. The results are depicted in Figure \ref{single_peak_result} and Figure \ref{double_peak_result}.
	\begin{figure}[htbp]
		\centering
		\includegraphics[width=0.9\textwidth]{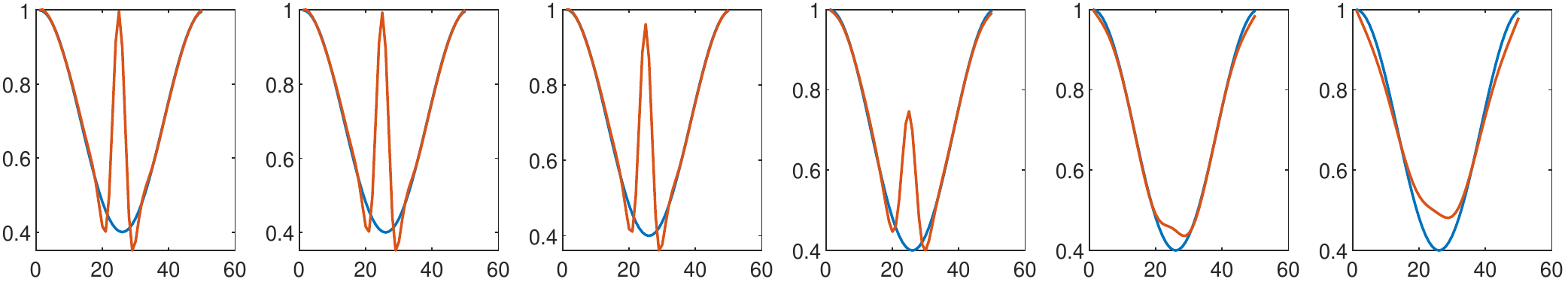}
		\caption{The result for $G_M(x)=1-0.6 \sin(\pi x)^2$ under the $H^2$ norm. From left to right,  $\gamma=10^{-8},10^{-7},\ldots,10^{-3}$. The red curve presents the leaned ground metric, and the blue curve depicts the real metric.}
		\label{single_peak_result}
	\end{figure}
	\begin{figure}[htbp]
		\centering
		\includegraphics[width=0.9\textwidth]{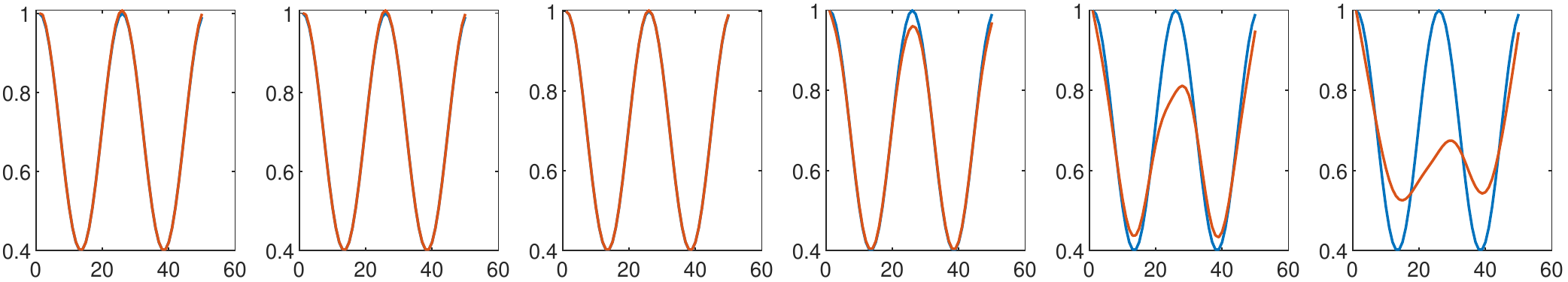}
		\caption{The result for $G_M(x)=1-0.6 \sin(2\pi x)^2$ under the $H^2$ norm. From left to right, $\gamma=10^{-8},10^{-7},\ldots,10^{-3}$.}
		\label{double_peak_result}
	\end{figure}
	\begin{remark}
		\rm From the numeric results, we conclude that the optimal parameter for the inverse model depends on the shape of the ground metric. When $g_0$ fluctuates more, we should choose a smaller $\gamma$ for the regularization. This matches our intuition.
	\end{remark}
\end{test}

\begin{test}
	\rm 
	We substituted the objective functions of (\ref{MFG-inv1}) and (\ref{MFG-inv2}) with (\ref{kinetic-KL}) and tested a similar primal-dual algorithm. In the ground metric inverse model, we took $p=2, \alpha=\alpha_0=0.01$, and $\gamma$ varied in $\{10^{-8},10^{-7},10^{-6},10^{-5}\}$. The step size for all primal variables was $\tau=2\times10^{-3}$, and the step size for dual variables was $\sigma=10^{-3}$. We iterated $3\times10^6$ times. The result can be found in \ref{JKO-metric}.
	
	In the kernel inverse model, we took $p=2,\alpha=\alpha_0=10$, $\gamma$ varied in $\{10^{-5},10^{-4},10^{-3},10^{-2}\}$. The iteration step size for all primal variables was $\tau=10^{-3}$, and the step size for dual variables was $\sigma=10^{-3}$ as well. We iterated $3\times10^6$ times. The result is in Figure \ref{JKO-kernel}.
	
	\begin{figure}[htbp]
		
		\centering
		\includegraphics[width=0.9\textwidth]{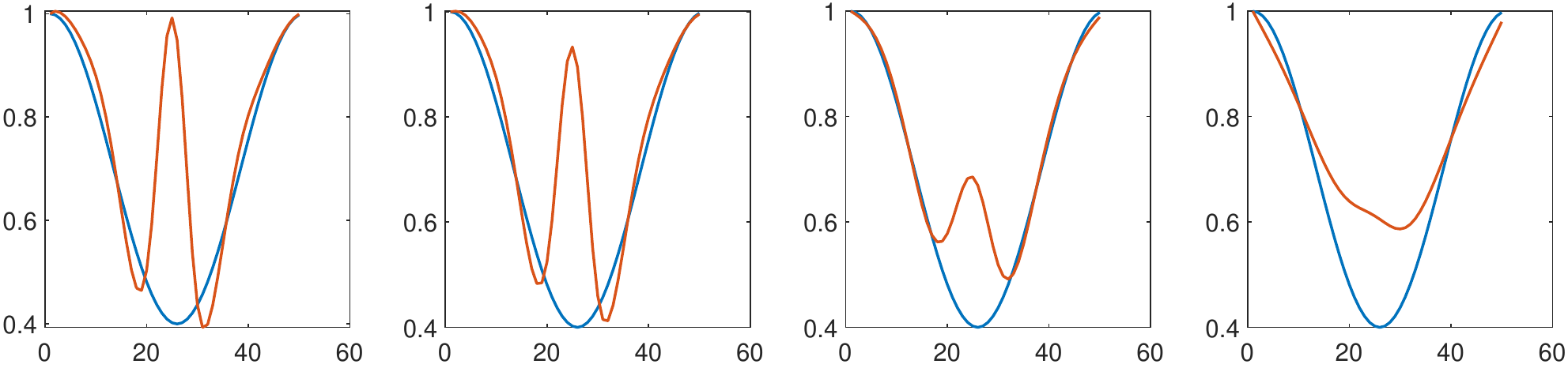}
		\caption{Recreated ground metric from the modified model with objective function (\ref{kinetic-KL}). From left to right, $\gamma=\{10^{-8},10^{-7},10^{-6},10^{-5}\}$.}
		\label{JKO-metric}
	\end{figure}
	
	\begin{figure}[htbp]
		
		\centering
		\includegraphics[width=0.9\textwidth]{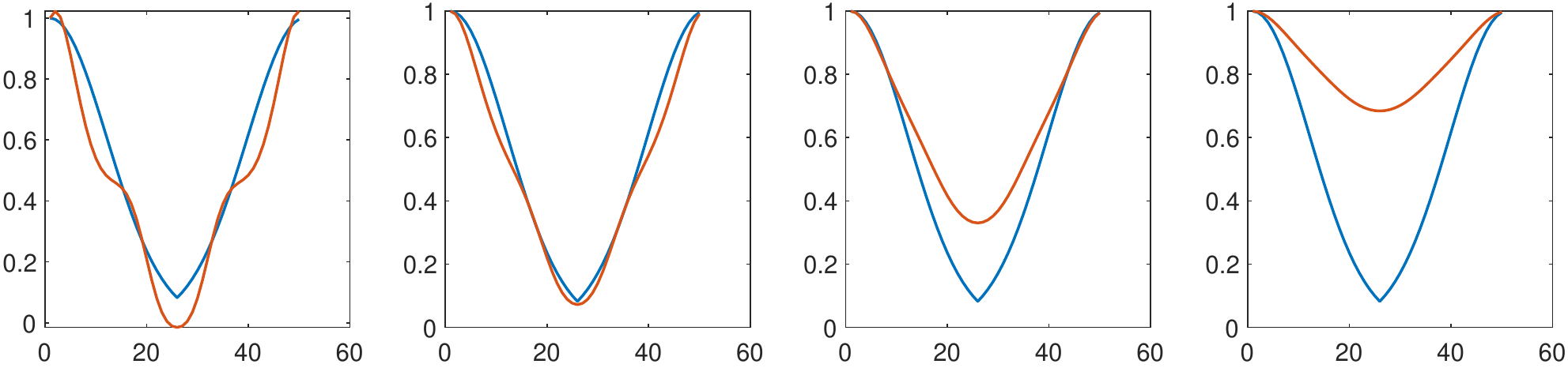}
		\caption{Recreated convolution kernel from the modified model with objective function (\ref{kinetic-KL}). From left to right, $\gamma=\{10^{-5},10^{-4},10^{-3},10^{-2}\}$.}
		\label{JKO-kernel}
	\end{figure}
\end{test}

\begin{test}
	\rm
	In this test, we used 2-dimensional noisy examples. We designed our experiment based on the data in Test \ref{test:2d-metric}, and the data was corrupted by additive noise defined in (\ref{noise}). For each noise level, the scaling parameters were selected as in Table \ref{table:2d-noise-metric}. Other settings followed Test \ref{test:2d-metric}. The result is depicted in Figure \ref{2d-noise-metric}.
	\begin{table}[htbp]
		\centering
		\begin{tabular}{c c c c c c}
			\hline
			Test & $\epsilon^*$ & $\alpha$ & $\alpha_0$& $\beta$ &$\gamma$ \\ [0.5ex]
			\hline\hline
			1 & 0.1 & $10/\|\hat{\rho}\|^2$ & 0 & $1/\hat{\vv}^T\hat{\vv}$ & 0.01\\
			2 & 0.4 & $1/\|\hat{\rho}\|^2$ & 0 & $1/\hat{\vv}^T\hat{\vv}$ & 0.01\\
			3 & 1 & $100/\|\hat{\rho}\|^2$ & 0 & $1/\hat{\vv}^T\hat{\vv}$ & 1 \\ [1ex]
			\hline
		\end{tabular}
		\caption{The scaling parameters adopted in each noisy test}
		\label{table:2d-noise-metric}
	\end{table}

	\begin{figure}[htbp]
		\centering
		\includegraphics[width=0.9\textwidth]{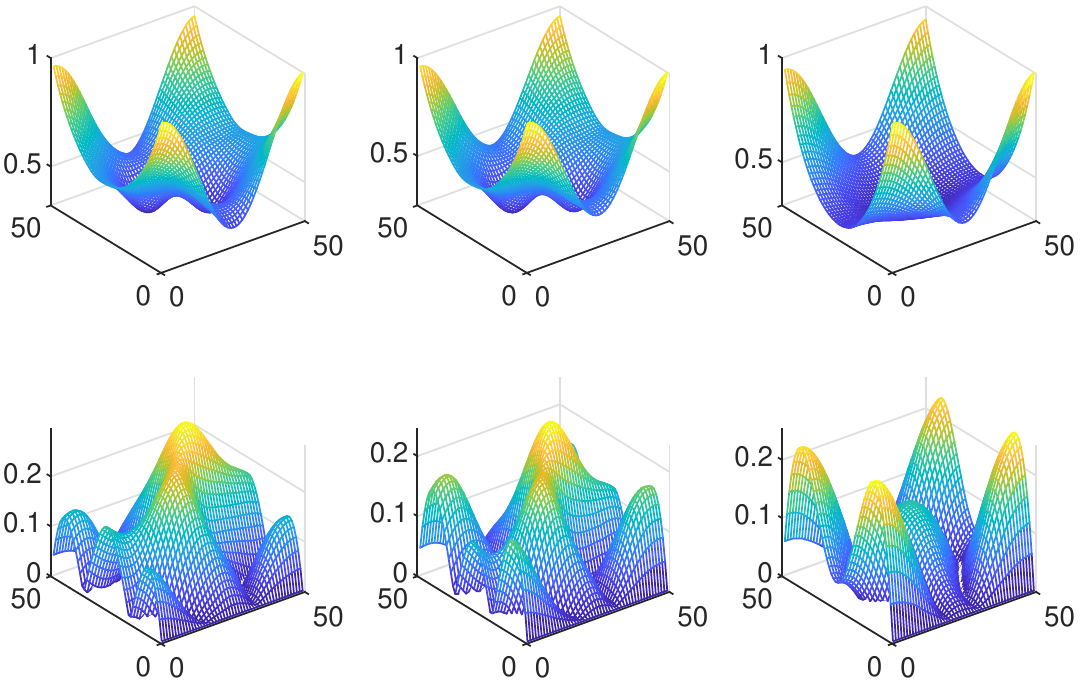}
		\caption{The recreated metric kernel from noisy data. In the first row, from right to left, the figures correspond to recreated metric kernel $g_0$ w.r.t. noise level $\epsilon^*=\{0.1,0.4,1\}$. The figures in the second row stand for the absolute difference between the learned $g_0$ and the real data $|g_0-\bar{g_0}|$ in each case.}
		\label{2d-noise-metric}
	\end{figure}
\end{test}

\begin{test}
	\rm In this test, we ran Algorithm \ref{alg:Bregman} and used the same data as Test \ref{test:kernel}, and the noise level was $\epsilon^*=1$. We set $\gamma=10^{-1}$, and the results of the first 11 Bregman iterations are depicted in Figure \ref{Breg_kernel_-1_1}.
	\begin{figure}[htbp]
		\centering
		\includegraphics[width=1\textwidth]{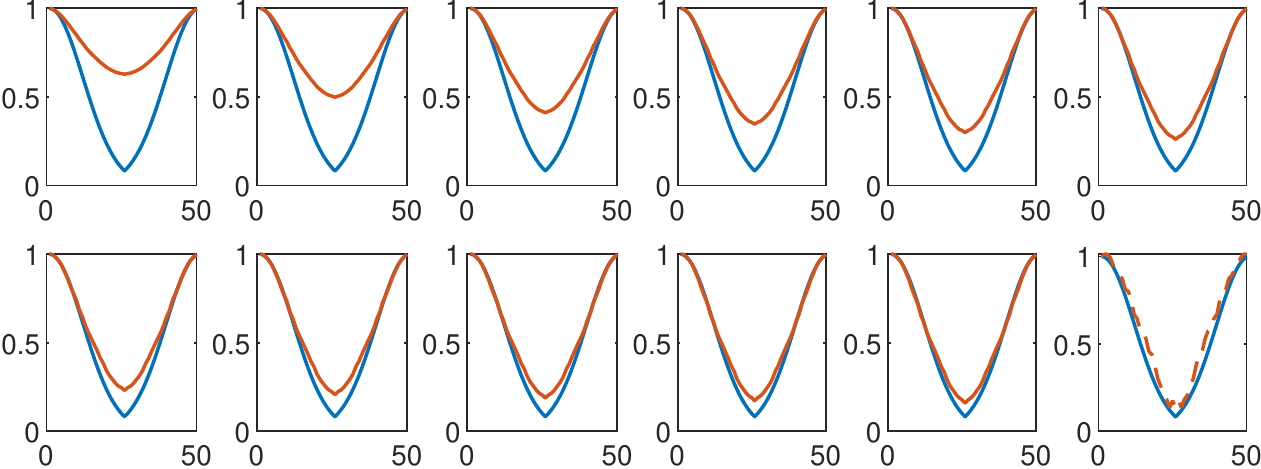}
		\caption{The recreated convolution kernels from noisy data from the first 11 Bregman iterations. $\gamma=10^{-1}$.
			(The last figure shows the recreated convolution kernel from Algorithm \ref{alg:primal-dual} with the nearly optimal $\gamma=10^{-3}$.)}
		\label{Breg_kernel_-1_1}
	\end{figure}
\end{test}

\end{document}